\newcolumntype{P}[1]{>{\centering\arraybackslash}p{#1}}
\newtheorem{theorem}{Theorem}
\newtheorem{lemma}{Lemma}
\newtheorem{proposition}{Proposition}
\newtheorem{assumption}{Assumption}
\begin{document}

\title{Perfect Sampling of $GI/GI/c$ Queues}
\author{Blanchet, J., Dong, J., and Pei, Y.}
\date{}
\maketitle

\begin{abstract}
We introduce the first class of perfect sampling algorithms for the
steady-state distribution of multi-server queues with general interarrival
time and service time distributions. Our algorithm is built on the classical
dominated coupling from the past protocol. In particular, we use a coupled
multi-server vacation system as the upper bound process and develop an
algorithm to simulate the vacation system backwards in time from stationarity
at time zero. The algorithm has finite expected termination time with mild
moment assumptions on the interarrival time and service time distributions.

\end{abstract}

\section{Introduction}

In this paper, we present the first class of perfect sampling algorithms for
the steady-state distribution of multi-server queues with general interarrival
time and service time distributions. Our algorithm has finite expected running
time under the assumption that the interarrival times and service times have
finite $2+\epsilon$ moment for some $\epsilon>0$.

The goal of perfect sampling is to sample without any bias from the
steady-state distribution of a given ergodic process. The most popular perfect
sampling protocol, known as Coupling From The Past (CFTP), was introduced by
Propp and Wilson in the seminal paper \cite{PropWil:1996}; see also
\cite{AsmGlyTho:1992} for another important early reference on perfect simulation.

Foss and Tweedie \cite{FosTwe:1998} proved that CFTP can be applied if and
only if the underlying process is uniformly ergodic, which is not a property
applicable to multi-server queues. So, we use a variation of the CFTP protocol
called Dominated CFTP (DCFTP) introduced by Kendall in \cite{Kendall:1998} and
later extended in \cite{KendallMoller:2000, Ken:2004}.

A typical implementation of DCFTP requires at least four ingredients:

\begin{itemize}
\item[a)] a stationary upper bound process for the target process,

\item[b)] a stationary lower bound process for the target process,

\item[c)] the ability to simulate a)\ and b) backwards in time (i.e. from time
$[-t,0]$ for any $t>0$)

\item[d)] a finite time $-T<0$ at which the state of the target process is
determined (typically by having the upper and lower bounds coalesce), and the
ability to reconstruct the target process from $-T$ up to time $0$.
\end{itemize}

The time $-T$ is called the coalescence time and it is desirable to have
$E\left(  T\right)  <\infty$. The ingredients are typically combined as
follows. One simulates a) and b) backwards in time (by applying c)) until the
processes meet. The target process is sandwiched between a) and b). Therefore,
if we can find a time $-T<0$ when processes a) and b) coincide, the state of
the target process is known at $-T$ as well. Then, applying d), we reconstruct the
target process from $-T$ up to time $0$. The algorithm outputs the state of the
target process at time $0$.

It is quite intuitive that the output of the above construction is
stationary.
Specifically, assume that the sample path of the target process, coupled with,
a)\ and b) is given from $(-\infty,0]$,
Then we can think of the simulation procedure in c) as
simply observing or unveiling the paths of a) and b) during $[-t,0]$. When we
find a time $-T<0$ at which the paths of a) and b) take the same value,
because of the sandwiching property, the target process must share this
common value at $-T$. Starting from that point, property d) simply
unveils the path of the target process. Since this path has
been coming from the infinite distant past (we simply observed it from time
$-T$), the output is stationary at time 0.

One can often improve the performance of a DCFTP protocol if the underlying
target process is monotone \cite{Ken:2004}, as in the multi-server queue
setting. A process is monotone if there exists a certain partial order, $\preceq$,
such that if $w$ and $w^{\prime}$ are initial states where $w\preceq
w^{\prime}$, and one uses common random numbers to simulate two paths, one
starting from $w$ and the other from $w^{\prime}$, then the order is
preserved when comparing the states of these two paths at any point in time.
Thus, instead of using the bounds a) and b) directly to detect coalescence,
one could apply monotonicity to detect coalescence as follows. At any time
$-t<0$, we can start two paths of the target process, one from the state
$w^{\prime}$ obtained from the upper bound a) observed at time $-t$, and the other from
the state $w\preceq w^{\prime}$ obtained from the lower bound b) observed at time $-t$.
Then we run these two paths using the common random numbers, which are consistent with
the backwards simulation of a) and b), in reverse order according to the
dynamics of the target process, and check if these two paths meet before time
zero. If they do, the coalescence occurs at such meeting time. We also notice
that because we are using common random numbers and system dynamics, these two
paths will merge into a single path from the coalescence time forward, and the
state at time zero will be the desired stationary draw. If coalescence does
not occur, then one can simply let $t\longleftarrow2t$, and repeat the above procedure. For
this iterative search procedure, we must show that the search terminates in
finite time.

While the DCFTP protocol is relatively easy to understand, its application is
not straightforward. In most applications, the most difficult part has to do
with element c). Then, there is the issue of finding good bounding processes
(elements a) and b)), in the sense of having short coalescence times - which
we interpret as making sure that $E\left(  T\right)  <\infty$. There has been
a substantial amount of research which develops generic algorithms for Markov
chains (see for example \cite{CorTwe:2001} and \cite{ConKen:2007}).
These methods rely on having access to the transition kernels which is
difficult to obtain in our case. Perfect simulation for queueing systems
has also received significant amount of attention in recent years, though most
perfect simulation algorithms for queues impose Poisson assumptions on the
arrival process. Sigman \cite{Sig:2011, Sig:2012} applied the DCFTP and
regenerative idea to develop perfect sampling algorithms for stable $M/G/c$
queues. The algorithm in \cite{Sig:2011} requires the system to be
super-stable (i.e. the system can be dominated by a stable $M/G/1$ with). The
algorithm in \cite{Sig:2012} works under natural stability conditions, but it
has infinite expected termination time. A recent work by Connor and Kendall
\cite{ConKen:2014} extends Sigman's algorithm \cite{Sig:2012} to sample
stationary $M/G/c$ queues and the algorithm has finite expected termination time, but they
still require the arrivals to be Poisson. The main reason for the Poisson
arrival assumption is that under this assumption one can find dominating
systems which are quasi-reversible (see Chapter 3 of
\cite{KellyReversibility:1979})
and therefore can be simulated backwards in time using standard Markov chain
constructions (element c)).

For general renewal arrival process, our work is close in the spirit to
\cite{EnsGlyn:2000}, \cite{BlanChen:2013}, \cite{BlanDong:2014} and
\cite{BlanWall:2014}, but the model treated is fundamentally different Thus,
it requires some new developments. We also use a different coupling
construction to that introduced in \cite{Sig:2012} and refined in
\cite{ConKen:2014}. In particular, we take advantage of a vacation system
which allows us to transform the problem into simulating the running infinite
horizon maximum (from time $t$ to infinity) of renewal processes, compensated
with a negative drift so that the infinite horizon maximum is well defined.
Finally, we note that a significant advantage of our method, in contrast to
\cite{Sig:2012} is that {we do not need to empty the system in order to
achieve coalescence. This is important in many server queues in heavy traffic
for which it would take an exponential amount of time (in the arrival rate) or
sometimes impossible to observe an empty system.}

The rest of the paper is organized as follows. In Section \ref{sec:main} we
describe our simulation strategy, involving elements a) to d), and we conclude
the section with the statement of a result which summarizes our main
contribution (Theorem \ref{Thm_Main_1}). Subsequent sections (Section
\ref{sec:vacation}, \ref{sec:coalescence} \& \ref{sec:algorithm}) provide more
detailed justification for our simulation strategy. Lastly we conduct
numerical experiments in Section \ref{sec:numerical}. An online companion of
this paper includes a Matlab implementation of the algorithm.


\section{Simulation Strategy and Main Result\label{sec:main}}

Our target process is the stationary process generated by a multi-server queue
with independent and identically distributed (iid) interarrival times and iid
service times which are independent of the arrivals. There are $c\geq1$
identical servers, each can serve at most one customer at a time. Customers
are served on a first-come-first-served (FCFS) basis. Let $G(\cdot)$ and
$\bar{G}(\cdot)=1-G(\cdot)$ (resp. $F(\cdot)$ and $\bar{F}(\cdot)=1-F(\cdot)$)
denote the cumulative distribution function, CDF, and the tail CDF of the
interarrival times (resp. service times). We shall use $A$ to denote a random
variable with CDF $G$, and $V$ to denote a random variable with CDF $F$.

\begin{assumption}
[A1]Both $A$ and $V$ are strictly positive with probability one and there
exists $\epsilon>0$, such that
\[
E[A^{2+\epsilon}]<\infty,\text{ \ \ }E[V^{2+\epsilon}]<\infty.
\]

\end{assumption}

The previous assumption will allow us to conclude that the coalescence time of
our algorithm has finite expectation. The algorithm will terminate with
probability one if $E[A^{1+\epsilon}]+E[V^{1+\epsilon}]<\infty$. The
assumption of $A$ and $V$ being strictly positive can be done without loss of
generality because one can always reparametrize the input in cases where
either $A$ or $V$ has an atom at zero.

We assume that $G\left(  \cdot\right)  $ and $F\left(  \cdot\right)  $ are
known so that the required parameters in Section 3.1.1 of \cite{BlanWall:2014}
can be obtained. We write $\lambda=(\int_{0}^{\infty}\bar{G}(t)dt)^{-1}%
=1/E\left[  A\right]  $ as the arrival rate, and $\mu=(\int_{0}^{\infty}%
\bar{F}(t)dt)^{-1}=1/E[V]$ as the service rate. In order to ensure the
existence of the stationary distribution of the system, we require the
following stability condition $\lambda/(c\mu)<1$.

\subsection{Elements of the simulation strategy: upper bound and coupling}

We first introduce some additional notations which we shall use to describe
the upper bound a) in the application of the DCFTP framework. Let
\[
\mathcal{T}^{0}:=\{T_{n}^{0}:n\in\mathbb{Z}\backslash\{0\}\}
\]
be a time-stationary renewal point process with $T_{n}^{0}>0$ if $n\geq1$ and
$T_{-n}<0$ if $n\geq1$ (the $T_{n}^{0}$'s are sorted in a non-decreasing order
in $n$). The time $T_{n}^{0}$ for $n\geq1$ represents the arrival time of the
$n$-th customer into the system after time zero and, for $n\geq1$, $T_{-n}%
^{0}$ is the arrival time of the $n$-th customer, backwards in time, from time
zero. We also define $T_{n}^{0,+}=\inf\{T_{m}^{0}:T_{m}^{0}>T_{n}^{0}\}$, that
is, the arrival time of the next customer after $T_{n}^{0}$. If $n\geq1$ or
$n\leq-2$, $T_{n}^{0,+}=T_{n+1}^{0}$. However, $T_{-1}^{0,+}=T_{1}^{0}$.
Similarly, we write $T_{n}^{0,-}=\sup\{T_{m}^{0}:T_{m}^{0}<T_{n}^{0}\}$.

Define $A_{n}=T_{n}^{0,+}-T_{n}^{0}$ for all $n\in\mathbb{Z}\backslash\{0\}$.
Note that $A_{n}$ is the interarrival time between the customer arriving at
time $T_{n}^{0}$ and the next customer. $A_{n}$ has CDF $G\left(
\cdot\right)  $ for $n\geq1$ or $n\leq-2$, but $A_{-1}$ has a different
distribution due the inspection paradox.

Now, for $i\in\{1,2,...,c\}$ we introduce iid time-stationary renewal point
processes
\[
\mathcal{T}^{i}:=\{T_{n}^{i}:n\in\mathbb{Z}\backslash\{0\}\},
\]
as before we have that $T_{n}^{i}>0$ for $n\geq1$ and $T_{-n}^{i}<0$ if
$n\geq1$ with the $T_{n}^{i}$'s sorted in a non-decreasing order. We also
define $T_{n}^{i,+}=\inf\{T_{m}^{i}:T_{m}^{i}>T_{n}^{i}\}$ and $T_{n}%
^{i,-}=\sup\{T_{m}^{i}:T_{m}^{i}<T_{n}^{i}\}$. Then we let $V_{n}^{i}%
=T_{n}^{i,+}-T_{n}^{i}$. We assume that $V_{n}^{i}$ has CDF $F\left(
\cdot\right)  $ for $n\geq1$ and $n\leq-2$. As we shall explain, the
$V_{n}^{i}$'s are \textit{activities} which are executed by the $i$-th server
in the upper bound process.

Next, we define, for each $i\in\{0,1,...,c\}$, and any $u\in\left(
-\infty,\infty\right)  $, the counting process
\[
N_{u}^{i}\left(  t\right):=\left\vert [u,u+t]\cap\mathcal{T}^{i}\right\vert
,
\]
for $t\geq0$, where
$\vert$
$\vert$
denotes cardinality. Note that as $T_{-1}^{i}<0<T_{1}^{i}$ by stationary,
$N_{0}^{i}\left(  0\right)  =0$. For simplicity in the notation let us write
$N^{i}\left(  t\right)  =N_{0}^{i}\left(  t\right)  $ if $t\geq0$ and
$N^{i}\left(  t\right)  =N_{t}^{i}\left(  -t\right)  $ if $t\leq0$.

The quantity $N_{u}^{0}\left(  t\right)  $ is the number of customers who
arrive during the time interval $[u,u+t]$. In the upper bound process, each of
the $c$ servers performs two types of activities: services and vacations.
$N_{u}^{i}\left(  t\right)  $ is the number of activities initiated by server
$i$ during the time interval $[u,u+t]$.

\subsubsection{The upper bound process}

We shall refer to the upper bound process as the \textit{vacation system}, for
reasons which will become apparent. Let us explain first in words how does the
vacation system operate. Customers arrive to the vacation system according to
$\mathcal{T}^{0}$, and the system operates similarly to a $GI/GI/c$ queue,
except that, every time a server (say server $i^{\ast}$) finishes an
activity (i.e. service or a vacation), if there is no customer waiting to be
served in the queue, server $i^{\ast}$ takes a vacation which has the same
distribution as the service time distribution; if there is at least one
customer waiting, such customer starts to be served by server $i^{\ast}$.
Similar vacation models have been used in \cite{Whitt:1970} and
\cite{GarGol:2013}.

More precisely, let $Q_{v}(t)$ denote the number of people waiting in queue at time $t$
in the stationary vacation system. We write $Q_{v}(t_{-}):=\lim_{s\uparrow
t}Q_{v}\left(  s\right)  $ and $dQ_{v}(t):=Q_{v}(t)-Q_{v}(t_{-})$.
Also, for for any $t\geq0$, $i\in\{0,...,c\}$ and each $u\in(-\infty,\infty)$,
define
\[
N_{u}^{i}(t_{-}):=\lim_{h\uparrow0}N_{u-h}^{i}\left(  t\right)  ,
\]
and let $dN_{u}^{i}\left(  t\right) :=N_{u}^{i}(t)-N_{u}^{i}(t_{-})$ for all
$t\geq0$ (note that as $N_{u}^{i}\left(  0_{-}\right)  =0$,
$dN_{u}^{i}\left(  0\right)  =N_{u}^{i}\left(  0_{-}\right)  $). Similarly,
for $t\leq0$, $N^{i}\left(  t_{-}\right)  =N_{t}^{i}\left(  \left\vert
t\right\vert _{-}\right)  $.

We also introduce $X_{u}(t):=N_{u}^{0}(t)-\sum_{i=1}^{c}N_{u}^{i}(t)$. Then the
dynamics of $\left(  Q_{v}\left(  t\right)  :t>0\right)  $ satisfy%
\begin{equation}
dQ_{v}\left(  t\right)  =dX_{0}\left(  t\right)  +I\left(  Q_{v}\left(
t_{-}\right)  =0\right)  \sum_{i=1}^{c}dN_{0}^{i}(t), \label{SDE_A}%
\end{equation}
given $Q_{v}\left(  0\right)  $. Note that here we are using the fact that
arrivals do not occur at the same time as the start of activity times; this is
because the processes $\mathcal{T}^{i}$ are time stationary (and independent)
renewal processes in continuous time so that $T_{-1}^{i}$ and $T_{1}^{i}$ have
a density.

It follows from standard arguments for Skorokhod mapping \cite{ChenYao:2013} that for $t\geq0$%
\[
Q_{v}(t)=Q_{v}(0)+X_{0}(t)-\inf_{0\leq s\leq t}\left(  \left(  X_{0}%
(s)+Q_{v}(0)\right)  ^{-}\right)  ,
\]
where $\left(  X_{0}(s)+Q_{v}(0)\right)  ^{-}=\min\left(  X_{0}\left(
s\right)  +Q_{v}(0),0\right)  $. Moreover, using Lyons construction we have
that $t\geq0$
\begin{equation}
Q_{v}(-t)=\sup_{s\geq t}X_{-s}\left(  0\right)  -X_{-t}\left(  0\right)
\label{Stat_Backwards}%
\end{equation}
(see, for example, Proposition 1 of \cite{BlanChen:2013}). $(Q_{v}%
(t):t\in\left(  -\infty,\infty\right)  )$ is a well defined process by virtue
of the stability condition $\lambda/(\mu c)<1$.

The vacation system and the target process (the $GI/GI/c$ queue) will be
coupled by using the same arrival stream of customers, $\mathcal{T}^{0}$, and
assuming that each customer brings his own service time. In particular, the
evolution of the underlying $GI/GI/c$ queue is described using a sequence of
the form $\left(  \left(  T_{n}^{0},V_{n}\right)  :n\in\mathbb{Z}%
\backslash\{0\}\right)  $, where $V_{n}$ is the service requirement of the
customer arriving at time $T_{n}^{0}$. The $V_{n}$'s must be extracted from
the evolution of $Q_{v}\left(  \cdot\right)  $ so that the same service times
are matched to the common arrival stream both in the vacation system and in the
target process.

\subsubsection{The coupling: extracting service times for each
costumer\label{Subsec_Coupling}}

In order to match the service times corresponding to each of the arriving
customers in the vacation system we define the following auxiliary processes.
For every $i\in\{1,...,c\}$, any $t>0$, and each $u\in\left(  -\infty
,\infty\right)  $, let $\sigma_{u}^{i}\left(  t\right)  $ denote the number of
service initiations by server $i$ during the time interval $[u,u+t]$. Observe
that
\[
\sigma_{u}^{i}\left(  t\right)  =\int_{[u,u+t]}I\left(  Q_{v}\left(
s_{-}\right)  >0\right)  dN_{u}^{i}\left(  s\right)  .
\]
That is, we count service initiations which start at time $T_{k}^{i}\in\lbrack
u,u+t]$ if and only if $Q_{v}\left(  T_{k-}^{i}\right)  >0$. Once again, here
we use that arrival times and activity initiation times do not occur simultaneously.

We now explain how to match the service time of the customer arriving at
$T_{n}^{0}$. First, such customer occupies position $Q_{v}\left(  T_{n}%
^{0}\right)  \geq1$ when he enters the queue. Let $D_{n}^{0}$ be the delay (or
waiting time) inside the queue of the customer arriving at
$T_{n}^{0}$, then we have that%
\[
D_{n}^{0}=\inf\{t\geq0:Q_{v}\left(  T_{n}^{0}\right)  =\sum_{i=1}^{c}%
\sigma_{T_{n}^{0}}^{i}\left(  t\right)  \},
\]
and therefore,
\begin{equation}
V_{n}=\sum_{i=1}^{c}V_{N^{i}\left(  T_{n}^{0}+D_{n}^{0}\right)  }^{i}\cdot
dN^{i}\left(  T_{n}^{0}+D_{n}^{0}\right)  . \label{EQ_V_MATCH}%
\end{equation}
Observe that the previous equation is valid because there is a unique
$i\left(  n\right)  \in\{1,...,c\}$ for which $dN^{i\left(  n\right)  }\left(
T_{n}^{0}+D_{n}^{0}\right)  =1$ and $dN^{j}\left(  T_{n}^{0}+D_{n}^{0}\right)
=0$ if $j\neq i\left(  n\right)  $ (ties are not possible because of the time
stationarity of the $\mathcal{T}^{i}$s), so we obtain that (\ref{EQ_V_MATCH})
is equivalent to
\[
V_{n}=V_{N^{i\left(  n\right)  }\left(  T_{n}^{0}+D_{n}^{0}\right)
}^{i\left(  n\right)  }.
\]
We shall explain in the appendix to this section,
that $\left(  V_{n}:n\in\mathbb{Z}\backslash\{0\}\right)  $ and $\left(
T_{n}^{0}:n\in\mathbb{Z}\backslash\{0\}\right)  $ are two independent
sequences and the $V_{n}$'s are iid copies of $V$.

\subsection{A family of $GI/GI/c$ queues and the target $GI/GI/c$ stationary
system}

We now describe the evolution of a family of standard $GI/GI/c$ queues. Once
we have the sequence $\left(  \left(  T_{n}^{0},V_{n}\right)  :n\in
\mathbb{Z}\backslash\{0\}\right)  $ we can proceed to construct a family of
continuous-time Markov processes $\left(  Z_{u}(t;z):t\geq0\right)  $ for each
$u\in\left(  -\infty,\infty\right)  $, given the initial condition
$Z_{u}\left(  0;z\right)  =z$. We write $z=\left(  q,r,e\right)  $, and set%
\[
Z_{u}(t;z):=\left(  Q_{u}\left(  t;z\right)  ,R_{u}\left(  t;z\right)
,E_{u}\left(  t;z\right)  \right)  ,
\]
for $t\geq0$, where $Q_{u}\left(  t;z\right)  $ is the number of people in the
queue at time $u+t$ ($Q_{u}\left(  0;z\right)  =q$), $R_{u}(t;z)$ is the
vector of ordered (ascending) remaining service times of the $c$ servers at
$u+t$ ($R_{u}(0;z)=r$), and $E_{u}(t;z)$ is the time elapsed since the
previous arrival at $u+t$ ($E_{u}(0;z)=e$).

We shall always use $E_{u}(0;z)=e=u-\sup\{T_{n}^{0}:T_{n}^{0}\leq u\}$ and we
shall select $q$ and $r$ appropriately based on the upper bound. The evolution
of the process $\left(  Z_{u}\left(  s;z\right)  :0<s\leq t\right)  $ is
obtained by feeding the traffic $\{\left(  T_{n}^{0},V_{n}\right)
:u<T_{n}^{0}\leq u+s\}$ for $s\in(0,t]$ into a FCFS $GI/GI/c$ queue with
initial conditions given by $z$. Constructing $\left(  Z_{u}\left(
s;z\right)  :0<s\leq t\right)  $ using the traffic trace $\{\left(  T_{n}%
^{0},V_{n}\right)  :u<T_{n}^{0}\leq u+s\}$ for $s\in(0,t]$ is standard (see
for example Chapter 3 of \cite{RubKro:2011}).

One can further describe the evolution of the underlying $GI/GI/c$ at arrival
epochs, using the Kiefer-Wolfowitz vector \cite{Asm:2003}. In particular, for
every non-negative vector $w\in\mathbb{R}^{c}$, such that $w^{\left(
i\right)  }\leq w^{\left(  i+1\right)  }$ (where $w^{\left(  i\right)  }$ is
the $i$-th entry of $w$), and each $k\in\mathbb{Z}\mathbf{\backslash}\{0\}$
the family of processes $\{W_{k}\left(  T_{n}^{0};w\right)  :n\geq
k,n\in\mathbb{Z}\mathbf{\backslash}\{0\}\}$ satisfies
\begin{align}
W_{k}\left(  T_{n}^{0,+};w\right)   &  =\mathcal{S}\left(  \left(
W_{k}\left(  T_{n}^{0};w\right)  +V_{n}\mathbf{e}_{1}-A_{n}\mathbf{1}\right)
^{+}\right)  ,\label{K_W_GGc}\\
W_{k}\left(  T_{k}^{0};w\right)   &  =w.\nonumber
\end{align}
where $\mathbf{e}_{1}=\left(  0,0,...,1\right)  ^{T}\in\mathbb{R}^{c}$,
$\mathbf{1}=\left(  1,...,1\right)  ^{T}\in\mathbb{R}^{c}$, and $\mathcal{S}$
is the sorting operator which arranges the entries in a vector in ascending
order. In simple words, $W_{k}\left(  T_{n}^{0};w\right)  $ for $k\geq1$
describes the Kiefer-Wolfowitz vector as observed by the customer arriving at
$T_{n}^{0}$, assuming that customer who arrived at $T_{k}^{0}$, $k\leq n$,
experienced the Kiefer-Wolfowitz state $w$.

Recall that the first entry of $W_{k}\left(  T_{n}^{0},w\right)  $, namely
$W_{k}^{\left(  1\right)  }\left(  T_{n}^{0},w\right)  $, is the waiting time
of the customer arriving at $T_{n}^{0}$ (given the initial condition $w$ at
$T_{k}^{0}$). More generally, the $i$-th entry of $W_{k}\left(  T_{n}%
^{0};w\right)  $, namely, $W_{k}^{\left(  i\right)  }\left(  T_{n}%
^{0};w\right)  $, is the virtual waiting time of the customer arriving at
$T_{n}^{0}$ if he decided to enter service immediately after there are at
least $i$ servers free once he reaches the head of the line. In other words,
one can also interpret $W_{k}\left(  T_{n}^{0};w\right)  $ as the remaining
vector of workloads (sorted in ascending order) that would be processed by
each of the $c$ servers at $T_{n}^{0}$, if no more arrivals got into the
system after time $T_{n}^{0}$.

We are now ready to construct the stationary version of the $GI/GI/c$ queue.
Namely, for each $n\in\mathbb{Z}\backslash\{0\}$ and every $t\in\left(
-\infty,\infty\right)  $ we define $W\left(  n\right)  $ and $Z\left(
t\right)  $ via
\begin{align}
W\left(  n\right)   & :=\lim_{k\rightarrow-\infty}W_{k}\left(  T_{n}%
^{0};0\right)  ,\label{Limits_W_Z_GGc}\\
Z\left(  t\right)   &  :=\left(  Q\left(  t\right)  ,R\left(  t\right)
,E\left(  t\right)  \right)  =\lim_{u\rightarrow-\infty}Z_{u}\left(
t-u,z_{-}\right)  ,\nonumber
\end{align}
where $z_{-}=\left(  0,0,e\right)  $.

We shall show in Proposition \ref{Thm_Const_St} that these limits are well defined.

\subsection{The analogue of the Kiefer-Wolfowitz process for the upper bound
system}

In order to complete the coupling strategy we also describe the evolution of
the analog Kiefer-Wolfowitz vector induced by the vacation system, which we
denote by $\left(  W_{v}\left(  T_{n}^{0}\right)  :n\in\mathbb{Z}%
\backslash\{0\}\right)  $, where $v$ stands for vacation. As with the $i$-th
entry of the Kiefer-Wolfowitz vector of a $GI/GI/c$ queue, the $i$-th entry of
$W_{v}\left(  T_{n}^{0}\right)  $, namely $W_{v}^{\left(  i\right)  }\left(
T_{n}^{0}\right)  $, is the virtual waiting time of the customer arriving at
time $T_{n}^{0}$ if he decided to enter service immediately after there are at
least $i$ servers free once he reaches the head of the line (assuming that
servers become idle once they see, after the completion of a current activity,
the customer in question waiting in the head of the line).

To describe the Kiefer-Wolfowitz vector induced by the vacation system
precisely, let $U^{i}\left(  t\right)  $ be the time until the next renewal
after time $t$ in $\mathcal{T}^{i}$, that is $U^{i}\left(  t\right)
=\inf\{T_{n}^{i}:T_{n}^{i}>t\}-t$. So, for example, $U^{0}\left(  T_{n}%
^{0}\right)  =A_{n}$ for $n\geq1$. Let $U\left(  t\right)  =\left(
U^{1}\left(  t\right)  ,...,U^{c}\left(  t\right)  \right)  ^{T}$. We then
have that%
\begin{equation}
W_{v}\left(  T_{n}^{0}\right)  =D_{n}^{0}\mathbf{1+}\mathcal{S}\left(
U\left(  \left(  T_{n}^{0}+D_{n}^{0}\right)  _{-}\right)  \right).
\label{Gen_Def_W_v}%
\end{equation}
In particular, note that $W_{v}^{\left(  1\right)  }\left(  T_{n}^{0}\right)
=D_{n}^{0}$.

Actually, in order to draw a closer connection to the Kiefer-Wolfowitz vector
recursion of a standard $GI/GI/c$ queue, let us write
\[
\mathcal{S}\left(  U\left(  \left(  T_{n}^{0}+D_{n}^{0}\right)  _{-}\right)
\right)  =(U^{\left(  1\right)  }\left(  \left(  T_{n}^{0}+D_{n}^{0}\right)
_{-}\right)  ,...,U^{\left(  c\right)  }\left(  \left(  T_{n}^{0}+D_{n}%
^{0}\right)  _{-}\right)  ^{T},
\]
and suppose that $U^{\left(  i\right)  }\left(  \left(  T_{n}^{0}+D_{n}%
^{0}\right)  _{-}\right)  =U^{j_{i}\left(  n\right)  }\left(  \left(
T_{n}^{0}+D_{n}^{0}\right)  _{-}\right)  \ $(i.e. $j_{i}\left(  n\right)  $ is
the server whose remaining activity time right before $T_{n}^{0}+D_{n}^{0}$ is
the $i$-th smallest in order). Then, define
\[
\bar{W}_{v}\left(  T_{n}^{0}\right)  =W_{v}\left(  T_{n}^{0}\right)
+V_{n}\mathbf{e}_{1}-A_{n}\mathbf{1,}%
\]
and let $\bar{W}_{v}^{\left(  i\right)  }\left(  T_{n}^{0}\right)  $ to be the
$i$-th entry of $\bar{W}_{v}\left(  T_{n}^{0}\right)  $. It is not difficult
to see from the definition of $W_{v}\left(  T_{n}^{0}\right)  $ that%
\[
W_{v}^{\left(  i\right)  }\left(  T_{n}^{0,+}\right)  =\mathcal{S}\left(
\left(  \bar{W}_{v}^{\left(  i\right)  }\left(  T_{n}^{0}\right)  \right)
^{+}+\Xi_{n}^{\left(  i\right)  }\right)  ,
\]
where
\begin{align*}
\Xi_{n}^{\left(  i\right)  }  &  =I(\bar{W}_{v}^{\left(  i\right)  }\left(
T_{n}^{0}\right)  <0)\cdot U^{j_{i}\left(  n\right)  }\left(  \left(
T_{n}^{0,+}+D_{n}^{0,+}\right)  _{-}\right)  ,\text{ and}\\
D_{n}^{0,+}  &  =\inf\{D_{k}^{0}:D_{k}^{0}>D_{n}^{0}\}.
\end{align*}
So, (\ref{Gen_Def_W_v}) actually satisfies%
\begin{equation}
W_{v}\left(  T_{n}^{0,+}\right)  =\mathcal{S}\left(  \left(  W_{v}\left(
T_{n}^{0}\right)  +V_{n}\mathbf{e}_{1}-A_{n}\mathbf{1}\right)  ^{+}%
\mathbf{+}\Xi_{n}\right)  , \label{Recursion_Wv}%
\end{equation}
where $\Xi_{n}=\left(  \Xi_{n}^{\left(  1\right)  },...,\Xi_{n}^{\left(
c\right)  }\right)  $.

\subsection{Description of simulation strategy and main result}

We now describe how the variation of DCFTP that we described in the
Introduction, using the monotonicity of the multiserver queue, and elements a)
to d) apply to our setting.

The upper bound is initialized using the Kiefer-Wolfowitz process associated
to the vacation system. For the lower bound, we shall simply pick the null vector.

The strategy combines the following facts (which we shall discuss in the sequel).

\begin{itemize}
\item \textbf{Fact I:} We can simulate $\sup_{s\geq t}X_{-s}\left(  0\right)
$, $\left(  N_{-t}^{i}\left(  0\right)  :1\leq i\leq c\right)  $, $\left(
N_{0}^{i}\left(  t\right)  :1\leq i\leq c\right)  $, jointly for any given
$t\geq0$. This part, which corresponds to item c), is executed using an
algorithm from \cite{BlanWall:2014} designed to sample the infinite horizon
running time maximum of a random walk with negative drift. We shall explain in
Section \ref{sec:algorithm} how the algorithm in \cite{BlanWall:2014} can be
easily modified to sample $\sup_{s\geq t}X_{-s}\left(  0\right)  $ jointly
with $(N_{-s}^{i}\left(  0\right)  :s\geq0)_{i=0}^{c}$.

\item \textbf{Fact II:} For all $k\leq-1$ and every $k\leq n\leq-1$ we have
that%
\[
W_{k}\left(  T_{n}^{0};0\right)  \leq W\left(  n\right)  \leq W_{k}\left(
T_{n}^{0};W_{v}\left(  T_{k}^{0}\right)  \right)  .
\]
This portion exploits the upper bound a) (i.e. $W_{v}\left(  T_{k}^{0}\right)
$), and the lower bound b) (i.e. 0).

\item \textbf{Fact III:} We can detect that coalescence occurs at some time
$-T\in\lbrack T_{\kappa}^{0},0]$ for some $\kappa\leq-1$ by finding
$n\in\mathbb{Z}_{-}$, $n\geq\kappa$, such that $T_{n}^{0}+W_{\kappa}^{\left(
1\right)  }\left(  T_{n}^{0};W_{v}\left(  T_{\kappa}^{0}\right)  \right)
\leq0$ and%
\[
W_{\kappa}\left(  T_{n}^{0};W_{v}\left(  T_{\kappa}^{0}\right)  \right)
=W_{\kappa}\left(  T_{n}^{0};0\right)  .
\]
This portion is precisely the coalescence detection strategy which uses
monotonicity of the Kiefer-Wolfowitz vector.

\item \textbf{Fact IV}: We can combine Facts I-III to conclude that
\begin{equation}
Z_{T_{\kappa}^{0}}\left(  \left\vert T_{\kappa}^{0}\right\vert ;Q\left(
T_{-\kappa}^{0}\right)  ,\mathcal{S}\left(  U\left(  T_{-\kappa}^{0}\right)
\right)  ,0\right)  =Z\left(  0\right)  \label{Stationary_sample}%
\end{equation}
is stationary. And we also have that%
\[
W_{\kappa}\left(  T_{1}^{0};0\right)  =W\left(  1\right)
\]
follows the stationary distribution of the Kiefer-Wolfowitz vector of a
$GI/GI/c$ queue.
\end{itemize}

The main result of this paper is the following.

\begin{theorem}
\label{Thm_Main_1}Assume (A1) is in force, with $\lambda/(\mu c)\in(0,1)$.
Then Facts I-IV hold true and (\ref{Stationary_sample}) is a stationary sample
of the state of the multi-server system. we can detect coalescence at a time
$-T<0$ such that $E\left(  T\right)  <\infty$.
\end{theorem}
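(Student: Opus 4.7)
The plan is to verify Facts I--IV in order, derive stationarity of the output via the DCFTP argument recalled in the introduction, and finally control the expected coalescence time.

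\emph{Fact I.} Under (A1) together with $\lambda/(c\mu)<1$, the compensated counting process $X_0(t)=N^0(t)-\sum_{i=1}^cN^i(t)$ is a sum of independent renewal processes with negative drift $\lambda-c\mu$ and with $2+\epsilon$ moments on its increments. I would invoke the algorithm of \cite{BlanWall:2014}, which simulates the infinite-horizon running maximum of such a random walk, and augment it with a bookkeeping layer that records the points of each $\mathcal{T}^i$ falling in $[-t,0]$, so that the joint tuple requested in Fact I is returned; the algorithmic details belong in Section \ref{sec:algorithm}.

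\emph{Fact II.} The key observation is that the recursion (\ref{K_W_GGc}) is monotone in its initial condition with respect to componentwise order on sorted vectors, since both the positive part and the sorting operator $\mathcal{S}$ preserve this order. Hence it suffices to verify the sandwich $0\preceq W(k)\preceq W_v(T_k^0)$ for the stationary vectors, and then push it forward through the common-randomness dynamics. The lower bound is trivial. For the upper bound, the coupling of Section \ref{Subsec_Coupling} assigns the same arrival stream and the same matched service time to each customer in both systems, while the vacation system carries extra vacation workload whenever a server would otherwise idle; this extra work can only push up the sorted remaining workloads, giving $W(k)\preceq W_v(T_k^0)$.

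\emph{Facts III, IV and stationarity.} Fact III is a direct corollary of Fact II: equality $W_\kappa(T_n^0;W_v(T_\kappa^0))=W_\kappa(T_n^0;0)$ forces the sandwiched $W(n)$ to coincide with both sides, and common random numbers keep the two runs merged from that point on; the additional condition $T_n^0+W_\kappa^{(1)}(T_n^0;W_v(T_\kappa^0))\le 0$ guarantees that this customer has already entered service by time $0$, so the continuous-time state $Z(0)$ can be reconstructed by feeding the common traffic trace into the $GI/GI/c$ dynamics starting from the deduced state at $T_\kappa^0$, yielding Fact IV. Since Fact I produces the vacation system backwards in time starting from the stationary law via (\ref{Stat_Backwards}), the general DCFTP argument recalled in the introduction then shows that (\ref{Stationary_sample}) and $W_\kappa(T_1^0;0)=W(1)$ both follow the stationary law.

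\emph{Main obstacle: $E(T)<\infty$.} The delicate part is integrability of the coalescence time under the doubling scheme. I would bound $T$ by the first $n\ge\kappa$ at which the two Kiefer-Wolfowitz chains started at $T_\kappa^0$ from $W_v(T_\kappa^0)$ and from $0$ coalesce under common arrivals and common matched services. For the doubling scheme it suffices to control $P(T>t)$ so that $\sum_{k\ge 0}2^kP(T>2^k)<\infty$. Using stationarity of the vacation system together with (\ref{Stat_Backwards}), the initial disagreement $\|W_v(T_\kappa^0)\|$ inherits its tail from $\sup_{s\ge t}X_{-s}(0)$, which under the $2+\epsilon$ moment assumption admits a Kingman-type bound. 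Combining this with the classical multi-server coupling-time estimates of Asmussen-type under $2+\epsilon$ moments on $A$ and $V$, the two chains coalesce within a number of arrivals whose expectation is finite on the event of any fixed backward horizon, and the renewal structure translates this back to elapsed time. The rigorous execution is relegated to Section \ref{sec:coalescence}.
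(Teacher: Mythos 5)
Your overall plan tracks the paper's architecture: verify Facts I--IV, invoke the DCFTP sandwiching argument to get stationarity of the output, and then bound $E[T]$. Facts I, III and IV are sketched correctly and in essentially the paper's way (Fact I is the random-walk decomposition in Section~\ref{sec:algorithm}; Facts III--IV are Proposition~\ref{Lem_Coupling_CT}). Two places deserve pushback.

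For Fact II your verbal argument --- ``the vacation system carries extra vacation workload whenever a server would otherwise idle; this extra work can only push up the sorted remaining workloads'' --- hides a real issue: the vacation system and the coupled $GI/GI/c$ queue generally assign customers to \emph{different} servers, so it is not a priori clear that the sorted remaining-workload vectors are componentwise ordered. The paper avoids this trap by writing a Lindley-type recursion~(\ref{Recursion_Wv}) for the vacation Kiefer--Wolfowitz vector with an explicit nonnegative correction $\Xi_n\ge 0$, and Lemma~\ref{lm:dome} then follows by induction. You should make the comparison at the level of these two recursions rather than at the level of physical work, since (\ref{Mon_KW_IC}) plus $\Xi_n\ge 0$ is precisely what makes the inequality go through despite the mismatched server assignments.

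The larger gap is in $E[T]<\infty$. ``Kingman-type bound on $\sup_{s\ge t}X_{-s}(0)$'' only controls the size of the initial disagreement $W_v(T_\kappa^0)$; it says nothing by itself about how long the two coupled Kiefer--Wolfowitz chains take to merge. And ``classical multi-server coupling-time estimates of Asmussen-type'' is not a result you can cite off the shelf here: what one finds in Asmussen (Lemma 2.4, Ch.~XII) is a positive-recurrence / regeneration \emph{technique}, not a coalescence bound for two chains driven by common randomness started from an unbounded random gap. The paper's Proposition~\ref{prop:termination} has to build this: (i) using a linear Lyapunov function and $E\bigl[\sum_i W_v^{(i)}(1)\bigr]<\infty$, show the upper chain hits the level set $\{W^{(c)}\le K\}$ in finite expected time; (ii) once there, show that a run of $\lceil cK/\epsilon\rceil+c$ consecutive ``good'' arrivals (short service, long interarrival) forces both chains to have every waiting customer enter service immediately, so that the sorted remaining-service vectors, and hence the Kiefer--Wolfowitz vectors, coincide; (iii) such a run occurs with probability at least $\delta^{\lceil cK/\epsilon\rceil+c+1}>0$ in each Lyapunov cycle, and Wald's identity converts the resulting geometric number of cycles into $E[\tau]<\infty$ and then $E[T]<\infty$. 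Your proposal needs this explicit mechanism --- the ``good-run forces simultaneous emptying of the waiting rooms'' step --- because the coalescence criterion used is equality of Kiefer--Wolfowitz vectors, not mere proximity, and there is no generic theorem that delivers it.
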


The rest of the paper is dedicated to the proof of Theorem 1. In Section
\ref{sec:vacation} we verify a number of monotonicity properties which in
particular allows us to conclude that the construction of $W\left(  n\right)
$ and $Z\left(  t\right)  $ is legitimate (i.e. that the limits exist almost
surely). This monotonicity properties also yield Fact II and pave the way to
verify Fact III. Section \ref{sec:coalescence} proves the finite expectation
of the coalescence time. In Section \ref{sec:algorithm} we give more details
as how to carry out the simulation of the upper bound process. But before
going over Facts, we conclude this section, as we promised, arguing
that the $V_{n}$'s are iid and independent of the arrival sequence
$\mathcal{T}^{0}$.

\subsection{Appendix: The iid property of the coupled service times and
independence of the arrival process}

In order to explain why the $V_{n}$'s form an iid sequence, independent of the
sequence $\mathcal{T}^{0}=\{T_{n}^{0}:n\in\mathbb{Z}\backslash\{0\}\}$, it is
useful to keep in mind the diagram depicted in Figure \ref{fig:tetris_B},
which illustrates a case involving two servers, $c=2$. \begin{figure}[ptb]
\centering
{\includegraphics[width=.3\textwidth]{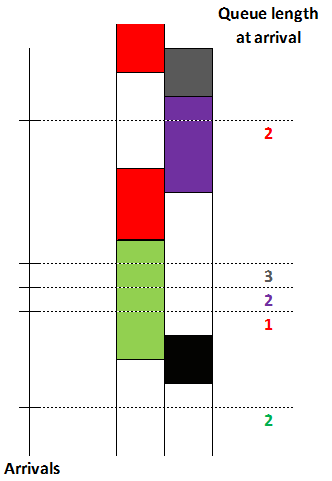} }%
\caption{Matching Procedure of Service Times to Arrival Process}%
\label{fig:tetris_B}%
\end{figure}

The assignment of the service times, as we shall explain, can be thought of as
a procedure similar to a tetris game. Arrival times are depicted by dotted
horizontal lines which go from left to right starting at the left most
vertical line, which is labeled \textquotedblleft Arrivals". Think of the time
line going, vertically, from the bottom of the graph (past) to the top of the
graph (future).

In the right most column in Figure \ref{fig:tetris_B} we indicate the queue
length, right at the time of a depicted arrival (and thus, including the
arrival itself). So, for example, the first arrival depicted in Figure
\ref{fig:tetris_B} observes one customer waiting and thus, including the
arrival himself, there are two customers waiting in queue.

The tetris configuration observed by an arrival at time $T$ is comprised of
two parts: i) the receding horizon, which corresponds to the remaining
incomplete blocks, and ii) the landscape, comprised of the configuration of
complete blocks. So, for example, the first arrival in Figure
\ref{fig:tetris_B} observes a receding horizon corresponding to the two white
remaining blocks which start from the dotted line at the bottom. The landscape
can be parameterized by a sequence of block sizes and the order of the
sequence is given by the way in which the complete blocks appear from bottom
to top -- this is precisely the tetris-game assignment. There are no ties
because of the continuous time stationarity and independence of the underlying
renewal processes. The colors are, for the moment, not part of the landscape.
We will explain the meaning of the colors momentarily.

The assignment of the service times is done as follows. The arriving customer
reads off the right-most column (with heading "Queue length at arrival") and
selects the block size labeled precisely with the number indicated by the
"Queue length at arrival". So, there are two distinctive quantities to keep in
mind assigned to each player (i.e. arriving customer): a) the landscape (or
landscape sequence, which, as indicated can be used to reconstruct the
landscape), and b)\ the \textit{service time}, which is the complete block
size occupying the "Queue length at arrival"-th position in the landscape sequence.

The color code in Figure \ref{fig:tetris_B} simply illustrates quantity b) for
each of the arrivals. So, for example, the first arrival, who reads "Queue
length at arrival = 2" (which we have written in green color),\ gets assigned
the second complete block, which we have depicted in green. Similarly, the
second arrival depicted, reads off the number "1" (written in red) and gets
assigned the first red block depicted (from bottom to top). The very first
complete block (from bottom to top), which is depicted in black, corresponds
to the service time assigned to the customer ahead of the customer who
collected the green block. The number "1" (in red) is obtained by observed
that the customer with the initial black block has departed.

Now we argue the following properties:

1) The service times are iid copies of $V$.

2)\ The service times are independent of $\mathcal{T}^{0}$.

About property 1): The player arriving at time $T$, reads a number,
corresponding to the queue length, which is obtained by the \textit{past
filtration} $\mathcal{F}_{T}$ generated by $\cup_{k\in\mathbb{Z}%
\backslash\{0\},0\leq i\leq c}\{T_{k}^{i}:T_{k}^{i}\leq T\}$. Conditional on
the receding horizon (i.e. remaining incomplete block sizes), $\mathcal{R}%
_{T}$, the past filtration is independent of the landscape. This is simply the
Markov property applied to the forward residual life time process of each of
the $c$ renewal processes represented by the $c$ middle columns. Moreover,
conditional on $\mathcal{R}_{T}$, each landscape forms a sequence of iid
copies of $V$ because of the structure of the underlying $c$ renewal processes
corresponding to the middle columns. So, let $Q\left(  T\right) $ denote the
queue length $T$ (including the arrival at time $T$), which is a function of
the past filtration, and let $\{L_{T}\left(  k\right)  :k\geq1\}$ be the
landscape sequence observed at time $T$, so that $L_{T}\left(  Q\left(
T\right)  \right)  $ is the service time of the customer who arrives at time
$T$. We then have that for any positive and bounded continuous function
$f\left(  \cdot\right)  $
\[
E[f\left(  L_{T}\left(  Q\left(  T\right)  \right)  \right)  |\mathcal{R}%
_{T}]=E[f\left(  L_{T}\left(  1\right)  \right)  |\mathcal{R}_{T}]=E[f\left(
V\right)  |\mathcal{R}_{T}],
\]
precisely because conditional on $\mathcal{R}_{T}$, $Q\left(  T\right)  $
(being $\mathcal{F}_{T}$ measurable) is independent of $L_{T}$.

To verify the iid property, let $f_{1},f_{2}$ be non-negative and bounded
continuous functions. And assume that $T_{1}<T_{2}$ are arrival times in
$\mathcal{T}^{0}$ (not necessarily consecutive). Then,
\begin{align*}
&  E[f_{1}\left(  L_{T_{1}}\left(  Q\left(  T_{1}\right)  \right)  \right)
f_{2}\left(  L_{T_{2}}\left(  Q\left(  T_{2}\right)  \right)  \right)  ]\\
&  =E[E[f_{1}\left(  L_{T_{1}}\left(  Q\left(  T_{1}\right)  \right)  \right)
f_{2}(L_{T_{2}}\left(  Q\left(  T_{2}\right)  \right)  )|\mathcal{F}_{T_{2}%
},\mathcal{R}_{T_{2}}]]\\
&  =E[f_{1}\left(  L_{T_{1}}\left(  Q\left(  T_{1}\right)  \right)  \right)
E[f_{2}(L_{T_{2}}\left(  Q\left(  T_{2}\right)  \right)  )|\mathcal{F}_{T_{2}%
},\mathcal{R}_{T_{2}}]]\\
&  =E[f_{1}(L_{T_{1}}\left(  Q\left(  T_{1}\right)  \right)  )]E[f_{2}%
(V)]=E[f_{1}(V)]E[f_{2}(V)].
\end{align*}
The same argument extends to any subset of arrival times and thus the iid
property follows.

About property 2): Note that in the calculations involving property 1), the
actual values of the arrival times $T$, and $T_{1}$ and $T_{2}$ are
irrelevant. The iid property of the service times is established path-by-path
conditional on the observed realization $\mathcal{T}^{0}$. Thus the
independence of the arrival process and service times follows immediately.


\section{Monotonicity Properties and Stationary $GI/GI/c$ System
\label{sec:vacation}}

This section we will present several lemmas which contain useful monotonicity
properties. The proofs of the lemmas are given at the end of this section in
order to quickly arrive to the main point of this section, which is the
construction of a stationary version of the $GI/GI/c$ queue.

First we recall that the Kiefer-Wolfowitz vector of $GI/GI/c$ queue is
monotone in the initial condition (\ref{Mon_KW_IC}) and invoke a property
(\ref{MON_LYONS}) which will allows us to construct a stationary version of
the Kiefer-Wolfowitz vector of our underlying $GI/GI/c$ queue, using Lyons construction.

\begin{lemma}
\label{Lem_Mon_KW}For $n\geq k,$ $k,n\in\mathbb{Z}\backslash\{0\}$,
$w^{+}>w^{-}$,
\begin{equation}
W_{k}\left(  T_{n}^{0};w^{+}\right)  \geq W_{k}\left(  T_{n}^{0};w^{-}\right)
. \label{Mon_KW_IC}%
\end{equation}
Moreover, if $k\leq k^{\prime}\leq n$%
\begin{equation}
W_{k}\left(  T_{n}^{0};0\right)  \geq W_{k^{\prime}}\left(  T_{n}%
^{0};0\right)  . \label{MON_LYONS}%
\end{equation}

\end{lemma}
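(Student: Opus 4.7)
The plan is to handle the two inequalities separately: first establish (\ref{Mon_KW_IC}) by induction on $n-k$ using the Kiefer-Wolfowitz recursion (\ref{K_W_GGc}), and then deduce (\ref{MON_LYONS}) from (\ref{Mon_KW_IC}) together with a simple semigroup identity for the recursion.

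For (\ref{Mon_KW_IC}), the base case $n=k$ is immediate since $W_k(T_k^0; w^\pm)=w^\pm$. For the inductive step, suppose that $W_k(T_n^0;w^+)\ge W_k(T_n^0;w^-)$ coordinate-wise and apply (\ref{K_W_GGc}). I would verify that each of the three operations appearing on the right-hand side preserves the coordinate-wise partial order on $\mathbb{R}^c$: (i) adding the common vector $V_n \mathbf{e}_1 - A_n \mathbf{1}$ obviously preserves $\ge$; (ii) the coordinate-wise positive part $(\cdot)^+$ is monotone; (iii) if $x\ge y$ coordinate-wise then $\mathcal{S}(x)\ge \mathcal{S}(y)$ coordinate-wise. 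Step (iii) is the only slightly delicate piece; it can be handled by the min-max representation
\[
\mathcal{S}(x)^{(i)} \;=\; \min_{|S|=c-i+1}\,\max_{j\in S} x_j,
\]
or equivalently by comparing, for every threshold $t$, the cardinalities $|\{j: x_j\le t\}|$ and $|\{j: y_j\le t\}|$. Composing (i), (ii), (iii) gives $W_k(T_n^{0,+};w^+)\ge W_k(T_n^{0,+};w^-)$ and closes the induction.

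For (\ref{MON_LYONS}), I would first record the semigroup/concatenation property of the recursion: whenever $k\le k'\le n$,
\[
W_k(T_n^0;0) \;=\; W_{k'}\!\left(T_n^0;\,W_k(T_{k'}^0;0)\right).
\]
This follows directly from (\ref{K_W_GGc}), since both sides obey the same recursion from $T_{k'}^0$ onward starting from the same state $W_k(T_{k'}^0;0)$. Since the Kiefer-Wolfowitz vector has non-negative entries, $W_k(T_{k'}^0;0)\ge 0$ coordinate-wise. Applying (\ref{Mon_KW_IC}) at starting epoch $T_{k'}^0$ with $w^+ = W_k(T_{k'}^0;0)$ and $w^- = 0$ yields
\[
W_{k'}\!\left(T_n^0;\,W_k(T_{k'}^0;0)\right) \;\ge\; W_{k'}(T_n^0;0),
\]
and combining with the semigroup identity gives (\ref{MON_LYONS}).

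The main obstacle is really just step (iii) above; once monotonicity of $\mathcal{S}$ under the componentwise order on sorted arguments is in hand, both inequalities follow almost mechanically. I do not expect any issues coming from the indexing convention $T_{-1}^{0,+}=T_1^0$, since the inductive argument proceeds through the recursion one step at a time and only uses that the increments $V_n$ and $A_n$ are well defined for all $n\in\mathbb{Z}\setminus\{0\}$.
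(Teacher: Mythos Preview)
Your proof is correct and follows essentially the same route as the paper's: induction on the Kiefer-Wolfowitz recursion for (\ref{Mon_KW_IC}), then the semigroup identity $W_k(T_n^0;0)=W_{k'}(T_n^0;W_k(T_{k'}^0;0))$ combined with (\ref{Mon_KW_IC}) and $W_k(T_{k'}^0;0)\ge 0$ for (\ref{MON_LYONS}). The only difference is that the paper treats the monotonicity of the sorting operator $\mathcal{S}$ as standard and does not justify it, whereas you spell it out via the min-max representation; otherwise the arguments are identical.
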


The second result allows to make precise a sense in which the vacation system
dominates a suitable family of $GI/GI/c$ systems, in terms of the underlying
Kiefer-Wolfowitz vectors.

\begin{lemma}
\label{lm:dome} For$~~n\geq k,$ $k,n\in\mathbb{Z}\backslash\{0\}$,
\[
W_{v}\left(  T_{n}^{0}\right)  \geq W_{k}\left(T_n^0;W_{v}\left(  T_{k}%
^{0}\right)  \right)  .
\]

\end{lemma}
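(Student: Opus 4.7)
The plan is to prove Lemma \ref{lm:dome} by induction on $n \geq k$, exploiting the parallel recursions (\ref{K_W_GGc}) and (\ref{Recursion_Wv}).

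For the base case $n = k$, both sides equal $W_{v}(T_{k}^{0})$ by the initialization in (\ref{K_W_GGc}), so the inequality holds (with equality). For the inductive step, suppose that, componentwise,
\[
W_{v}(T_{n}^{0}) \geq W_{k}(T_{n}^{0}; W_{v}(T_{k}^{0})).
\]
Both recursions are driven by the \emph{same} arrival gap $A_{n}$ and the \emph{same} service time $V_{n}$ (this is precisely the content of the coupling described in Section \ref{Subsec_Coupling}), so I would subtract $A_{n}\mathbf{1}$, add $V_{n}\mathbf{e}_{1}$, take positive parts, and sort on both sides, and then account for the extra $\Xi_{n}$ term on the vacation side.

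The key monotonicity facts I would invoke are: (i) coordinatewise addition of a common vector preserves componentwise order; (ii) the map $x \mapsto x^{+}$ is componentwise monotone; (iii) $\Xi_{n}^{(i)} \geq 0$ for every $i$ by its definition as an indicator times a forward residual life time; and (iv) the sort operator $\mathcal{S}$ is monotone in the sense that if $x \geq y$ componentwise then $\mathcal{S}(x) \geq \mathcal{S}(y)$ componentwise (this follows from the fact that the $i$-th smallest entry of $x$ is no smaller than the $i$-th smallest entry of $y$ whenever $x \geq y$ entrywise, a standard rearrangement fact). Combining these, I obtain
\[
\mathcal{S}\!\left( (W_{v}(T_{n}^{0}) + V_{n}\mathbf{e}_{1} - A_{n}\mathbf{1})^{+} + \Xi_{n} \right) \geq \mathcal{S}\!\left( (W_{k}(T_{n}^{0}; W_{v}(T_{k}^{0})) + V_{n}\mathbf{e}_{1} - A_{n}\mathbf{1})^{+} \right),
\]
which by (\ref{Recursion_Wv}) and (\ref{K_W_GGc}) is exactly $W_{v}(T_{n}^{0,+}) \geq W_{k}(T_{n}^{0,+}; W_{v}(T_{k}^{0}))$, closing the induction.

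The main obstacle in executing this plan is the careful justification of fact (iv) together with the verification that $\Xi_{n} \geq 0$ componentwise. The sign of $\Xi_{n}^{(i)}$ is immediate from its definition (it is a product of a nonnegative indicator and a forward residual life time of a renewal process), but the monotonicity of $\mathcal{S}$ under componentwise order requires a brief argument: if $x_{\pi(1)} \leq \cdots \leq x_{\pi(c)}$ and $y_{\tau(1)} \leq \cdots \leq y_{\tau(c)}$ are the sorted orders of $x \geq y$, then $x_{\pi(i)} \geq y_{\tau(i)}$ for each $i$, since among the $c - i + 1$ largest entries of $x$ at least one index must match an index among the $c - i + 1$ largest entries of $y$, and the $x$-value there dominates the $y$-value. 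Once these two points are noted, the rest of the induction reduces to the compositional monotonicity of $x \mapsto \mathcal{S}((x + V_{n}\mathbf{e}_{1} - A_{n}\mathbf{1})^{+})$, and the lemma follows.
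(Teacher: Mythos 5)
Your overall approach---induction on $n$ using the parallel recursions (\ref{K_W_GGc}) and (\ref{Recursion_Wv}), together with $\Xi_{n}\geq 0$---is exactly what the paper does (the paper's proof is a one-line appeal to precisely these facts), and spelling out monotonicity facts (i)--(iv) is the right way to fill in the details. However, your pigeonhole argument for the monotonicity of $\mathcal{S}$ (fact (iv)) does not go through as written. First, the index sets of the $c-i+1$ largest entries of $x$ and of $y$ each have cardinality $c-i+1$, which is at most $c/2$ once $i > c/2$, so the two sets need not intersect. Second, even when a common index $j$ exists, you only obtain $x_{j}\geq y_{j}\geq y_{\tau(i)}$ together with $x_{j}\geq x_{\pi(i)}$, and this does not close the chain to the desired $x_{\pi(i)}\geq y_{\tau(i)}$. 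The correct pigeonhole pairs the $i$ \emph{smallest} indices of $x$, namely $\{\pi(1),\dots,\pi(i)\}$, with the $c-i+1$ \emph{largest} indices of $y$, namely $\{\tau(i),\dots,\tau(c)\}$: these have combined cardinality $c+1$, so they share some index $j$, and then $x_{\pi(i)}\geq x_{j}\geq y_{j}\geq y_{\tau(i)}$. (Alternatively, the variational formula $x_{(i)}=\min_{|S|=i}\max_{j\in S}x_{j}$ gives the monotonicity of the sort in one line.) With this repair your argument is complete and coincides with the paper's.
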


The next result shows that in terms of queue length processes, the vacation
system also dominates a family of $GI/GI/c$ queue, which we shall use to
construct the upper bounds.

\begin{lemma}
\label{th:dom1} Let $q=Q_{v}(u)$, $r=\mathcal{S}\left(  U\left(  u\right)
\right)  $, and $e=u-\sup\{T_{n}^{0}:T_{n}^{0}\leq u\}$, so that
$z^{+}=\left(  q,r,e\right)  $ and $z^{-}=\left(  0,0,e\right)  $ then for
$t\geq u$
\[
Q_{u}(t-u;z^{-})\leq Q_{u}(t-u;z^{+})\leq Q_{v}(t).
\]

\end{lemma}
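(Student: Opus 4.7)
The lemma splits into two inequalities, so the plan is to handle each separately. The first, $Q_u(t-u; z^-) \leq Q_u(t-u; z^+)$, is the pathwise monotonicity of the $GI/GI/c$ queue in its initial condition under common random numbers. The state $z^+ = (q, r, e)$ dominates $z^- = (0, 0, e)$ in the natural partial order (more waiting customers and componentwise larger residual service times, same time-since-last-arrival), and feeding both queues the same arrival stream $\mathcal{T}^0$ together with the matched service sequence $(V_n)$ from Section~\ref{Subsec_Coupling} preserves this order in time. At arrival epochs this is a direct consequence of (\ref{Mon_KW_IC}) in Lemma~\ref{Lem_Mon_KW}; between arrival epochs the queue of the $z^-$-system loses a customer to service only if the analogous transition has already occurred at the same or an earlier time in the $z^+$-system, so the ordering persists throughout.

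For the harder inequality, $Q_u(t-u; z^+) \leq Q_v(t)$, the plan is a sample-path comparison anchored by Lemma~\ref{lm:dome}. By construction of $z^+$ the two processes are in identical configurations at time $u$: the same queue length $q$ and the same sorted vector of residual times $r$ on the $c$ servers. Both see the same arrival stream $\mathcal{T}^0$ after $u$, and Section~\ref{Subsec_Coupling} attaches the same service requirement $V_n$ to the arrival at $T_n^0$ in each process. The structural source of the inequality is that whenever a server in the vacation system finishes an activity while the queue is empty, it immediately commits to a fresh vacation drawn from $\mathcal{T}^i$, whereas the corresponding server in the standard system becomes idle and can serve the very next arrival with zero wait. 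Every such discrepancy inflates the vacation-system queue relative to the standard-system queue.

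To formalize this at arrival epochs I would choose $k$ so that $T_k^0$ is the latest arrival at or before $u$; since no arrival occurs in $(T_k^0, u]$, starting the standard system from $W_v(T_k^0)$ at $T_k^0$ produces exactly the Kiefer-Wolfowitz trajectory $W_k(T_n^0; W_v(T_k^0))$ for $n > k$, and Lemma~\ref{lm:dome} yields the componentwise bound $W_v(T_n^0) \geq W_k(T_n^0; W_v(T_k^0))$. Reading off the first component (the virtual wait seen by the arriving customer) and recalling that the queue length at time $t$ is the number of past arrivals whose service-start time still lies in the future, this translates into $Q_u(T_n^0 - u; z^+) \leq Q_v(T_n^0)$ at each arrival epoch. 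Between arrival epochs the queue in each system is non-increasing and decreases only through service initiations, whose times are controlled by the smallest component of the Kiefer-Wolfowitz vector; the componentwise domination then preserves the ordering of queue lengths for all $t \geq u$.

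The main obstacle I anticipate is bookkeeping the coupling cleanly once the two systems begin to diverge: the residuals $r$ at time $u$ encode a mix of residual service times and residual vacation times inside the vacation system, but the standard $GI/GI/c$ treats all of them uniformly as residual services. This mismatch is ultimately harmless because the future $GI/GI/c$ dynamics depend on $z^+$ only through the queue length and the residual-time vector, not on the semantic origin of those residuals; nonetheless, verifying that the service-time matching from Section~\ref{Subsec_Coupling} aligns consistently with the standard system's evolution at each server-completion event requires a careful case analysis, which is the step I expect to occupy most of the actual write-up.
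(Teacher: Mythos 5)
Your first inequality, $Q_u(t-u;z^-)\leq Q_u(t-u;z^+)$, is handled correctly by monotonicity in the initial condition, which is the paper's argument as well. The second inequality, however, has a genuine gap in the formalization. You anchor at $T_k^0$, the last arrival at or before $u$, and apply Lemma~\ref{lm:dome} to obtain $W_v(T_n^0)\geq W_k(T_n^0;W_v(T_k^0))$. But $W_k(\cdot;W_v(T_k^0))$ is \emph{not} the Kiefer--Wolfowitz trajectory of the process $Z_u(\cdot;z^+)$ whose queue length you need to control. The process $Z_u(\cdot;z^+)$ is launched at time $u$ with \emph{all} $c$ servers busy, since $r=\mathcal{S}(U(u))$ is strictly positive componentwise, precisely because it inherits the vacation system's residuals (including vacation residuals). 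The standard system started at $T_k^0$ from $W_v(T_k^0)$, by contrast, may by time $u$ have servers sitting idle — this happens exactly when the vacation system's queue hit zero somewhere in $(T_k^0,u)$. At time $u$ both standard systems have the same queue length $q$, but $Z_u(\cdot;z^+)$ has componentwise larger residuals, hence a larger KW vector at the next arrival $T_N^0$. Your chain therefore bounds the smaller quantity $W_k(T_N^0;W_v(T_k^0))$, not the KW vector $w_N$ of $Z_u(\cdot;z^+)$, and $W_v(T_N^0)\geq w_N$ does not follow.

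The paper resolves this by anchoring \emph{forward} at $T_N^0=\inf\{T_n^0\geq u\}$. Over $[u,T_N^0]$ there are no arrivals, $Z_u(\cdot;z^+)$ and the vacation system start from identical states at $u$, the initial $q$ waiting customers enter service at exactly the same times in both, and the only divergence is that some servers in $Z_u(\cdot;z^+)$ go idle (residual zero) while the vacation-system servers carry strictly positive vacation residuals. This gives $R_u(T_N^0-u;z^+)\leq\mathcal{S}(U(T_N^0))$ componentwise, hence $w_N\leq W_v(T_N^0)$, and only then do Lemma~\ref{lm:dome} and the monotonicity of Lemma~\ref{Lem_Mon_KW} propagate the bound forward via $W_v(T_k^0)\geq W_N(T_k^0;W_v(T_N^0))\geq W_N(T_k^0;w_N)$. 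The ``residual-origin'' issue you flag at the end as an ultimately harmless bookkeeping nuisance is in fact the crux: it is exactly what makes $Z_u(\cdot;z^+)$ differ from the system restarted at $T_k^0$, and it is why the anchoring must be done at $T_N^0$ with an explicit residual comparison on $[u,T_N^0]$ rather than at $T_k^0$.
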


Using Lemmas \ref{Lem_Mon_KW}, \ref{lm:dome}, and \ref{th:dom1} we can
establish the following result.

\begin{proposition}
\label{Thm_Const_St}The limits defining $W\left(  n\right)  $ and $Z\left(
t\right)  $ in (\ref{Limits_W_Z_GGc}) exist almost surely. Moreover, we have
that Fact II holds.
\end{proposition}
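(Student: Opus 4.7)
My plan is to establish existence of the discrete limit $W(n)$ via monotone convergence, deduce Fact II from the same monotonicity combined with the Kiefer-Wolfowitz semigroup property, and then obtain $Z(t)$ as a deterministic continuous function of the relevant $W(n)$.

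For the existence of $W(n)$, fix $n \leq -1$ and consider $k' \leq k \leq n$. Property (\ref{MON_LYONS}) gives $W_{k'}(T_n^0;0) \geq W_k(T_n^0;0)$ entry-wise, so the family $\{W_k(T_n^0;0)\}_{k \leq n}$ is monotone non-decreasing as $k \to -\infty$. For a uniform upper bound, (\ref{Mon_KW_IC}) together with Lemma~\ref{lm:dome} give
\[
W_k(T_n^0;0) \;\leq\; W_k\bigl(T_n^0;\,W_v(T_k^0)\bigr) \;\leq\; W_v(T_n^0),
\]
and $W_v(T_n^0)$ is a.s.\ finite under $\lambda/(\mu c) < 1$. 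Monotone convergence then produces $W(n)$ a.s.\ for $n \leq -1$, and the forward recursion (\ref{K_W_GGc}) extends the definition to all $n \in \mathbb{Z}\setminus\{0\}$.

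For Fact II, the lower bound is just the monotonicity above. For the upper bound I use the Kiefer-Wolfowitz semigroup identity
\[
W_{k'}(T_n^0;0) \;=\; W_k\bigl(T_n^0;\,W_{k'}(T_k^0;0)\bigr), \qquad k' \leq k \leq n,
\]
which is immediate from (\ref{K_W_GGc}) because the recursion from $T_k^0$ onward depends only on the state at $T_k^0$ and the common inputs $(A_j,V_j)$ for $k \leq j < n$. Applying the uniform upper bound from the previous paragraph with $n$ replaced by $k$ shows $W_{k'}(T_k^0;0) \leq W_v(T_k^0)$ for all $k' \leq k$, so (\ref{Mon_KW_IC}) yields $W_{k'}(T_n^0;0) \leq W_k(T_n^0;W_v(T_k^0))$; letting $k' \to -\infty$ produces $W(n) \leq W_k(T_n^0;W_v(T_k^0))$, which is Fact II.

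For $Z(t)$, fix $t$ and let $N = N(t)$ be the largest index with $T_N^0 \leq t$. For $u < T_N^0$, let $n^{\ast}(u)$ be the index of the first arrival strictly after $u$. The empty initial state $z_- = (0,0,e(u))$ at time $u$ forces the Kiefer-Wolfowitz vector at $T_{n^{\ast}(u)}^0$ to equal the zero vector, so the Kiefer-Wolfowitz vector at $T_N^0$ equals $W_{n^{\ast}(u)}(T_N^0;0)$, and $Z_u(t-u;z_-)$ is a fixed deterministic continuous function of this vector together with $t - T_N^0$ and $V_N$ (none of which depends on $u$ once $u \leq T_{N-1}^0$). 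Since $n^{\ast}(u) \to -\infty$ as $u \to -\infty$, the convergence $W_{n^{\ast}(u)}(T_N^0;0) \to W(N)$ from the first paragraph forces $Z_u(t-u;z_-)$ to converge a.s.\ to the value of that map at $W(N)$. The principal technical point is the semigroup identity used for Fact II; the rest is pathwise monotone convergence with finiteness of the limit supplied by the vacation-system dominance from Lemma~\ref{lm:dome}.
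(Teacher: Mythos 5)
Your arguments for the existence of $W(n)$ and for Fact~II are correct and take essentially the same route as the paper: monotonicity (\ref{MON_LYONS}) for the non-decreasing sequence, the chain $W_k(T_n^0;0)\leq W_k(T_n^0;W_v(T_k^0))\leq W_v(T_n^0)$ (from (\ref{Mon_KW_IC}) and Lemma~\ref{lm:dome}) for the uniform a.s.\ finite bound, and then the Kiefer--Wolfowitz semigroup property plus the bound $W(k)\leq W_v(T_k^0)$ to get the two halves of Fact~II. Your variant of the Fact~II upper bound, working with $W_{k'}(T_n^0;0)=W_k(T_n^0;W_{k'}(T_k^0;0))$ at finite $k'\leq k$ and then letting $k'\to-\infty$, is a slight refinement that avoids invoking the limiting identity $W(n)=W_k(T_n^0;W(k))$ directly, but it is the same argument in substance.

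The gap is in the step for $Z(t)$. You claim that $Z_u(t-u;z_-)$ is a fixed deterministic function of the triple $\left(W_{n^\ast(u)}(T_N^0;0),\; t-T_N^0,\; V_N\right)$ alone. This is false, because the Kiefer--Wolfowitz vector at $T_N^0$ records the sorted times until each of the $c$ servers would be free, \emph{not} the number of waiting customers or the decomposition of the remaining work into individual jobs. Concretely, take $c=1$: with arrivals at $0,1,2$ and service times $10,5,\ast$, the customer at $T_N^0=2$ has $W^{(1)}(N)=13$ and the queue length at $T_N^0$ is $2$; with arrivals at $0,2$ and service times $15,\ast$, the customer at $T_N^0=2$ again has $W^{(1)}(N)=13$ but the queue length at $T_N^0$ is $1$. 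For any $t$ shortly after $T_N^0$, both systems present identical $(W(N),\,t-T_N^0,\,V_N)$ but different $Q(t)$ and different residual service times $R(t)$. In general $Q(t)$ is a count over \emph{all} indices $j\leq N$ of those $j$ with $T_j^0+W^{(1)}(j)>t$, so it depends on the full family of Kiefer--Wolfowitz vectors up to $N$, not only $W(N)$. The paper instead deduces convergence of $Q_u(t-u;z_-)$ directly from the domination $Q_u(t-u;z^-)\leq Q_v(t)$ in Lemma~\ref{th:dom1} together with the Lyons-type monotonicity of the empty-start queue length in $u$, and only uses Kiefer--Wolfowitz convergence to identify the residual-work component $R$ at the special epochs $T_n^0+W^{(1)}(n)$. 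You need an argument of that kind (monotonicity plus the vacation-system bound) for $Q_u$ itself; the reduction to the single vector $W(N)$ does not go through.
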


\begin{proof}
[Proof of Proposition \ref{Thm_Const_St}]Using Lemma \ref{lm:dome} and Lemma
\ref{Lem_Mon_KW} we have that
\[
W_{v}\left(  T_{n}^{0}\right)  \geq W_{k}\left(  T_{n}^{0};W_{v}\left(
T_{k}^{0}\right)  \right)  \geq W_{k}\left(  T_{n}^{0};0\right)  .
\]
So, by property (\ref{MON_LYONS}) in Lemma \ref{Lem_Mon_KW} we conclude that
the limit defining $W\left(  n\right)  $ exists almost surely and that
\begin{equation}
W\left(  n\right)  \leq W_{v}\left(  T_{n}^{0}\right)  . \label{Bound_KF_KF}%
\end{equation}
Similarly, using Lemma \ref{th:dom1} we can obtain the existence of the limit $Q(t)$ and we have that
$Q\left(  t\right)  \leq Q_{v}\left(  t\right)$. Moreover, by
convergence of the Kiefer-Wolfowitz vectors we obtain the $i$-th entry of
$R\left(  T_{n}^{0}+W^{(1)}(n)\right)  $, namely, $R^{\left(  i\right)  }\left(
T_{n}^{0}+W^{(1)}(n)\right)  =\left(  W^{\left(  i\right)  }\left(  n\right)  -W^{\left(
1\right)  }\left(  n\right)  \right)  ^{+}$,
where $i\in\{1,...,c\}$. Clearly,
since the age process has been taken underlying $\mathcal{T}^{0}$, we have
that $E\left(  t\right)  =t-\sup\{T_{n}^{0}:T_{n}^{0}\leq t\}$. The fact that
the limits are stationary follows directly from the limiting procedure and it
is standard in Lyons-type constructions. For Fact II, we use the identity
$W\left(  n\right)  =W_{k}\left(  T_{n}^{0};W\left(  k\right)  \right)  $,
combined with Lemma \ref{Lem_Mon_KW} to obtain
\[
W_{k}\left(  T_{n}^{0};0\right)  \leq W_{k}\left(  T_{n}^{0};W\left(
k\right)  \right)  =W\left(  n\right)  ,
\]
and then we apply Lemma \ref{lm:dome}, together with (\ref{Bound_KF_KF}), to
obtain%
\[
W\left(  n\right)  = W_{k}\left(  T_{n}^{0};W\left(  k\right)  \right)  \leq
W_{k}\left(  T_{n}^{0};W_{v}\left(  T_{k}^{0}\right)  \right)  .
\]
The previous two inequalities are precisely the statement of Fact II.
\end{proof}

\subsection{Proof of technical Lemmas}

\begin{proof}
[Proof of Lemma \ref{Lem_Mon_KW}]Both facts are standard, the first one can be
easily shown using induction. Specifically, we first notice that $W_{k}%
(T_{k}^{0}; w^{+})=w^{+}>w^{-}=W_{k}(T_{n}^{0};w^{-})$ Suppose that
$W_{k}(T_{n}^{0}; w^{+})\geq W_{k}(T_{n}^{0};w^{-})$ for some $n \geq0$, then
\begin{align*}
W_{k}(T_{n+1}^{0}; w^{+})  &  =\mathcal{S}\left(  \left(  W_{k}(T_{n}^{0};
w^{+})+V_{n}-A_{n}\right)  ^{+}\right) \\
&  \geq\mathcal{S}\left(  \left(  W_{k}(T_{n}^{0}; w^{-})+V_{n}-A_{n}\right)
^{+}\right)  =W_{k}(T_{n+1}^{0}; w^{-}).
\end{align*}
For inequality (\ref{MON_LYONS}), we note that
$W_{k}\left(  T_{k^{\prime}}^{0};0\right)  \geq W_{k^{\prime}}\left(
T_{k^{\prime}}^{0};0\right)  =0$,
and therefore, due to (\ref{Mon_KW_IC}), we have that%
\[
W_{k}\left(  T_{n}^{0};0\right)  =W_{k^{\prime}}\left(  T_{n}^{0};W_{k}\left(
k^{\prime};0\right)  \right)  \geq W_{k^{\prime}}\left(T_n^0;0\right)  .
\]
\end{proof}

\bigskip

\begin{proof}
[Proof of Lemma \ref{lm:dome}]This fact follows immediately by induction from
equations (\ref{K_W_GGc}) and (\ref{Recursion_Wv}) using the fact that
$\Xi_{n}\geq0$.
\end{proof}

\bigskip

\begin{proof}
[Proor of Lemma \ref{th:dom1}] We first prove the inequality $Q_{u}(t-u;z^{+})\leq
Q_{v}(t)$. Note that $U^{i}\left(  u\right)  >0$ for all $u$ (the forward
residual life time process is right continuous), so the initial condition $r$
indicates that all the servers are busy (operating) and the initial $q\geq0$
customers will leave the queue (i.e. enter service) at the same time
in the vacation system as under the evolution of $Z_{u}\left(
\cdot;z^{+}\right)  $. Now, let us write $N=\inf\{n:T_{n}^{0}\geq u\}$ (in
words, the next arriving customer at or after $u$ arrives at time $T_{N}^{0}%
$). It is easy to see that $\mathcal{S}\left(
U\left(  T_{N}^{0}\right)  \right)  \geq R_{u}(T_{N}^{0}-u;z^{+})$; to wit, if
$T_{N}^{0}$ occurs before any of the servers becomes idle, then we have
equality, and if $T_{N}^{0}$ occurs after, say $l\geq1$, servers become idle,
then $R_{u}(T_{N}^{0}-u;z^{+})$ will have $l$ zeroes and the bottom $c-l$
entries will coincide with those of $\mathcal{S}\left(  U\left(  T_{N}%
^{0}\right)  \right)  $, which has strictly positive entries. So, if $w_{N}$
is the Kiefer-Wolfowitz vector observed by the customer arriving at $T_{N}%
^{0}$ (induced by $Q_{u}(\cdot-u;z^{+})$), then we have $W_{v}\left(
T_{N}^{0}\right)  \geq w_{N}$. By monotonicity of the Kiefer-Wolfowitz vector
in the initial condition and because of Lemma \ref{lm:dome}, we have
\[
W_{v}\left(  T_{k}^{0}\right)  \geq W_{N}\left(  T_{k}^{0},W_{v}\left(
T_{N}^{0}\right)  \right)  \geq W_{N}\left(  T_{k}^{0},w_{N}\right)  ,
\]
for all $k\geq N$, and hence, $T_{k}^{0}+D_{k}^{0}\geq
T_{k}^{0}+W_{N}^{\left(  1\right)  }\left(  T_{k}^{0},w_{N}\right)  $.
Therefore, the departure time from the queue (i.e. initiation of service) of
the customer arriving at $T_{k}^{0}$ in the vacation system occurs after the
departure time from the queue of the customer arriving at time $T_{k}^{0}$ in
the $GI/GI/c$ queue. Consequently, we conclude that the set of customers
waiting in the queue in the $GI/GI/c$ system at time $t$ is a subset of the
set of customers waiting in the queue in the vacation system at the same time.
Similarly, we consider $Q_{u}(t-u;z^{-})\leq Q_{u}(t-u;z^{+})$, which is
easier to establish, since for $k\geq N$ (with the earlier definition of $T_{N}^{0}$
and $w_{N}$),
\[
W_{N}\left(  T_{k}^{0};w_{N}\right)  \geq W_{N}\left(  T_{k}^{0};0\right)  ,
\]
So the set of customers waiting in the queue in the lower bound $GI/GI/c$
system at time $t$ is a subset of the set of customers waiting in the upper
bound $GI/GI/c$ system at the same time.
\end{proof}


\section{The coalescence detection in finite time \label{sec:coalescence}}

In this section, we give more details about the coalescence detection scheme.
The next result corresponds to Fact III and Fact IV.

\begin{proposition}
\label{Lem_Coupling_CT}Suppose that $w^{+}=W_{v}\left(  T_{k}^{0}\right)  $
and $w^{-}=0$. Assume that $W_{k}\left(  T_{n}^{0};w^{+}\right)  =W_{k}\left(
T_{n}^{0};w^{-}\right)  $ for some $k\leq n\leq-1$. Then, $W_{k}\left(
T_{m}^{0};w^{+}\right)  =W\left(  m\right)  =W_{k}\left(  T_{m}^{0}%
;w^{-}\right)  $ for all $m\geq n$. Moreover, for all $t\geq T_{n}^{0}%
+W_{k}^{\left(  1\right)  }\left(  T_{n}^{0};w^{+}\right)  $,%
\begin{equation}
Z_{T_{k}^{0}}(t-T_{k}^{0};Q_{v}\left(  T_{k}^{0}\right)  ,\mathcal{S}\left(
U\left(  T_{k}^{0}\right)  \right)  ,0)=Z_{T_{k}^{0}}(t-T_{k}^{0}%
;0,0,0)=Z\left(  t\right)  . \label{EQ_Z}%
\end{equation}

\end{proposition}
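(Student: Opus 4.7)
The plan is to prove Proposition~\ref{Lem_Coupling_CT} in three stages, climbing from the Kiefer--Wolfowitz recursion up to the continuous-time state and finally to the stationary target.

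First I would establish coalescence at the Kiefer--Wolfowitz level. Since $W_k(T_n^0;w^+)=W_k(T_n^0;w^-)$ by hypothesis and both sequences evolve under the common recursion (\ref{K_W_GGc}) driven by the same $V_m$ and $A_m$, a one-line induction on $m\geq n$ gives $W_k(T_m^0;w^+)=W_k(T_m^0;w^-)$ for all $m\geq n$. Then Fact II, already verified in Proposition~\ref{Thm_Const_St}, sandwiches $W_k(T_m^0;0)\leq W(m)\leq W_k(T_m^0;W_v(T_k^0))$; the outer terms now agree, so $W(m)$ coincides with this common value. This dispatches the first conclusion of the proposition.

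Next I would lift coalescence from $W$ to the full state $Z=(Q,R,E)$. Let $t^\ast:=T_n^0+W_k^{(1)}(T_n^0;w^+)$ and check component-by-component that $Z_{T_k^0}(t^\ast-T_k^0;z^+)=Z_{T_k^0}(t^\ast-T_k^0;z^-)$: the elapsed-time $E$ depends only on $\mathcal{T}^0$; during $[T_n^0,t^\ast)$ no server becomes free in either process (by the interpretation of $W^{(1)}$ as the smallest projected workload), so all arrivals in that window merely accumulate behind $n$ identically in both processes, forcing $Q(t^\ast)$ to agree; and the vector $R(t^\ast)$ consists of $V_n$ on the server that has just picked up $n$ together with the values $W_k^{(i)}(T_n^0;w)-W_k^{(1)}(T_n^0;w)$ for $i=2,\dots,c$, which depend only on the common Kiefer--Wolfowitz vector at $T_n^0$ and on $V_n$. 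For $t\geq t^\ast$, equality is then propagated by the Markov dynamics in $(Q,R,E)$ driven by the shared arrival stream and service times.

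Finally, to identify the coalesced value with the stationary $Z(t)$, I would use the sandwich
\[
Z_{T_k^0}(t-T_k^0;z^-)\ \leq\ Z(t)\ \leq\ Z_{T_k^0}(t-T_k^0;z^+),
\]
whose lower inequality follows from the Lyons-type construction of $Z(t)$ as the monotone limit $Z_u(t-u;z_-)$ combined with the monotonicity of $Z$ in its initial condition (Lemmas~\ref{Lem_Mon_KW} and~\ref{th:dom1}), and whose upper inequality follows analogously from the domination by the vacation-initialized process together with Lemma~\ref{lm:dome}. Since the outer terms coincide for $t\geq t^\ast$, so does $Z(t)$, which is exactly (\ref{EQ_Z}).

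The main obstacle is the second stage: going from equality of Kiefer--Wolfowitz vectors at $T_n^0$ to equality of the continuous-time state at $t^\ast$. The delicate point is that, at the instant $T_n^0$ itself, the two processes can genuinely differ in queue length and in their individual server workloads, because $z^+$ may have loaded the system with customers that $z^-$ never had. One must argue that the FCFS dispatch of everyone queued ahead of $n$ flushes out these hidden discrepancies exactly by time $t^\ast$, and the cleanest way to do so is to exploit the interpretation of $W_k(T_n^0;w)$ as the sorted vector of projected remaining workloads, from which the post-$t^\ast$ server states can be read off directly.
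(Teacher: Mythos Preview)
Your plan matches the paper's proof: coalescence at the Kiefer--Wolfowitz level via the shared recursion and Fact~II, then lifting to $Z$ by reading $R(t^\ast)$ off the common Kiefer--Wolfowitz vector (the paper writes $R=W_k(T_n^0;\cdot)-W_k^{(1)}(T_n^0;\cdot)\mathbf{1}$, i.e.\ the instant just before $n$ starts service; you include $V_n$, i.e.\ the instant just after---either convention works), and finally identifying the result with $Z(t)$ via monotonicity (the paper phrases this as a limit together with Lemma~\ref{th:dom1}, you as an explicit sandwich---both are fine). One wording issue in your queue-length argument: during $[T_n^0,t^\ast)$ servers \emph{do} complete services, precisely in order to pick up the customers queued ahead of $n$; the reason arrivals in that window accumulate behind $n$ is FCFS, not server unavailability, and the reason the queue lengths coincide at $t^\ast$ is that in both processes every customer up to and including $n$ has entered service by $t^\ast$ (since $n$ just did) while every arrival after $n$ has not.
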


\begin{proof}
[Proof of Proposition \ref{Lem_Coupling_CT}]The fact that
\[
W_{k}\left(  T_{m}^{0};w^{+}\right)  =W\left(  m\right)  =W_{k}\left(
T_{m}^{0};w^{-}\right)
\]
for $m\geq n$ follows immediately from the recursion defining the
Kiefer-Wolfowitz vector. Now, to show the first equality in (\ref{EQ_Z}) it
suffices to consider $t=T_{n}^{0}+W_{k}^{\left(  1\right)  }\left(  T_{n}%
^{0};w^{+}\right)  $, since from $t\geq T_{n}^{0}$ the input is exactly the
same and everyone coming after $T_{n}^{0}$ will depart the queue and enter
service after $T_{n}^{0}+W_{k}^{\left(  1\right)  }\left(  T_{n}^{0}%
;w^{+}\right)  $. The arrival processes (i.e. $E_{u}\left(  \cdot\right)  $)
clearly agree, so we just need to verify that the queue lengths and the
residual service times agree. First, note that
\begin{align}
&  R_{T_{k}^{0}}(T_{n}^{0}+W_{k}^{\left(  1\right)  }\left(  T_{n}^{0}%
;w^{+}\right)  -T_{k}^{0};Q_{v}\left(  T_{k}^{0}\right)  ,\mathcal{S}\left(
U\left(  T_{k}^{0}\right)  \right)  ,0)\label{EQ_DIS_R}\\
&  =W_{k}\left(  T_{n}^{0};w^{+}\right)  -W_{k}^{\left(  1\right)  }\left(
T_{n}^{0};w^{+}\right)  \mathbf{1}\nonumber\\
&  =W_{k}\left(  T_{n}^{0};w^{-}\right)  -W_{k}^{\left(  1\right)  }\left(
T_{n}^{0};w^{-}\right)  \mathbf{1}\nonumber\\
&  =R_{T_{k}^{0}}(T_{n}^{0}+W_{k}^{\left(  1\right)  }\left(  T_{n}^{0}%
;w^{-}\right)  -T_{k}^{0};0,0,0).\nonumber
\end{align}
So, the residual service times of both upper and lower bound processes agree.
The agreement of the queue lengths follows from Lemma \ref{th:dom1}. Finally,
the second equality in (\ref{EQ_Z}) is obtained by taking the limit in the
last equality in (\ref{EQ_DIS_R}), and the equality between queue
lengths follows again from Lemma \ref{th:dom1}.
\end{proof}

\bigskip

Next we analyze properties of the coalescence time. Define%
\[
T_{-}=\sup\{T_{k}^{0}\leq0:\inf_{T_{k}^{0}\leq t\leq0}[Z_{T_{k}^{0}}%
(t-T_{k}^{0};Q_{v}\left(  T_{k}^{0}\right)  ,\mathcal{S}\left(  U\left(
T_{k}^{0}\right)  \right)  ,0)-Z_{T_{k}^{0}}(t-T_{k}^{0};0,0,0)]=0\}.
\]
By time reversibility we have that $\left\vert T_{-}\right\vert $ is equal in
distribution to
\[
T=\inf\{T_{k}^{0}\geq0:\inf_{0\leq t\leq T_{k}^{0}}[Z_{0}(t;Q_{v}\left(
0\right)  ,\mathcal{S}\left(  U\left(  0\right)  \right)  ,0)-Z_{0}%
(t;0,0,0)]=0\}.
\]
We next establish that $E[T]<\infty$. This result, except for the
verification of Fact I, which will be discussed in Section \ref{sec:algorithm}%
, completes the proof of Theorem \ref{Thm_Main_1}.

\begin{proposition}
\label{prop:termination} If $E[V_{n}]<cE[A_{n}]$ for $n\geq1$ and Assumption
(A1) holds,
\[
E[T]<\infty.
\]

\end{proposition}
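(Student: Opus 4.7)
My plan is to dominate the forward coalescence time $T$ by the first emptying time of the upper-bound $GI/GI/c$ system and then combine a Foster-Lyapunov estimate for that emptying time with a moment bound on the vacation-system initial state.

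I would begin by using Lemma \ref{th:dom1} and the FCFS coupling to reduce $T$ to a single hitting-time problem. At any time $t^*$ at which the upper-bound system $Z_0(\cdot; Q_v(0), \mathcal{S}(U(0)), 0)$ has all servers idle and queue empty, the lower-bound system $Z_0(\cdot; 0, 0, 0)$ must also be totally empty: queue dominance forces queue zero, and idleness of all upper-bound servers means every customer arriving in $[0, t^*]$ has completed service in both systems (same arrivals, same per-customer service times, and the lower bound carries no initial load). Hence both $Z$ processes coincide at $t^*$, so $T \le T_{\tau^*}^0$, where $\tau^*\ge 1$ is the first arrival epoch at which the Kiefer-Wolfowitz vector governing the upper-bound evolution equals $\mathbf{0}$.

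Next, conditional on the first post-$0$ arrival observing a Kiefer-Wolfowitz vector $w$ in the upper bound, $T_{\tau^*}^0$ is the first emptying time of a stable $GI/GI/c$ queue started from $w$. Summing the recursion (\ref{K_W_GGc}) yields
\[
S(W(n+1)) = S(W(n)) + V_n - c A_n + I_n,
\]
with $S(W(n)) = \sum_{i=1}^c W^{(i)}(n)$ the total remaining work and $I_n \ge 0$ the aggregate server-idle time in the $n$-th interarrival period, bounded above by $cA_n$. Since $E[V]-cE[A]<0$ and $E[(V-cA)^{2+\epsilon}]$ is finite by Assumption (A1), a standard Foster-Lyapunov argument with a quadratic test function (or the classical downcrossing analysis for a random walk with negative drift and $2+\epsilon$ moments) yields $E[\tau^* \mid W = w] \le C_1(1 + \|w\|_1)$, and Wald's identity together with finiteness of $E[A]$ lifts this to $E[T_{\tau^*}^0 \mid W = w] \le C_2(1 + \|w\|_1)$.

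It then remains to show $E[\|w\|_1]<\infty$. From (\ref{Gen_Def_W_v}), $\|w\|_1$ is bounded by $c$ times the stationary vacation delay $D$ plus the sum of the stationary residual activities $U^i(0)$, whose common mean $E[V^2]/(2E[V])$ is finite under (A1). Bounding $D$ by a sum of $Q_v(0)+1$ iid service requirements via the tetris assignment in Section \ref{Subsec_Coupling}, I reduce everything to a moment bound on $Q_v(0)$. The Lyons representation (\ref{Stat_Backwards}) gives $Q_v(0)=\sup_{s\ge 0} X_{-s}(0)$, the supremum of a compensated superposition of $c+1$ renewal processes with negative drift $\lambda-c\mu$; embedding at the event epochs of the superposition and invoking the Kiefer-Wolfowitz moment theorem for suprema of random walks, the $(2+\epsilon)$-moment hypothesis in (A1) delivers $E[Q_v(0)]<\infty$, closing the argument.

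I expect the principal obstacle to be the final moment bound on $Q_v(0)$: because $X_{-s}(0)$ is a difference of $c+1$ superposed renewal processes rather than a classical iid random walk, the $(2+\epsilon)$-moment condition on $A$ and $V$ has to be transferred to the increments of an embedded walk through a Palm-calculus computation or a carefully chosen embedding. The Foster-Lyapunov step is routine once the drift identity above is in place, but the joint distribution of $(V_n-cA_n, I_n)$ must be handled with some care to ensure the linear-in-$\|w\|_1$ bound on $E[\tau^* \mid W = w]$.
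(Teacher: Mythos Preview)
Your strategy differs from the paper's in a substantive way. You bound $T$ by the first time the upper-bound $GI/GI/c$ queue becomes completely empty, and then try to control that emptying time via the total-work function $S(W)=\sum_i W^{(i)}$. The paper never waits for emptying: it fixes $m,\epsilon$ with $P(H_n)>\delta$ for $H_n=\{V_n<cm-\epsilon,\ A_n>m\}$, and shows that whenever $W^{(c)}\le K$ and then $\lceil cK/\epsilon\rceil+c$ consecutive events $H_n$ occur, the upper and lower Kiefer--Wolfowitz vectors must coincide (the last $c$ arrivals all have zero wait and displace every earlier customer). Positive Harris recurrence of the Kiefer--Wolfowitz chain yields uniformly bounded expected returns to $\{W^{(c)}\le K\}$, and a geometric-trial argument over successive returns finishes the proof. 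This is precisely in line with the remark in the introduction that coalescence need not require the system to empty.

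There is a genuine gap in your Foster--Lyapunov step. The identity $S(W(n{+}1))=S(W(n))+V_n-cA_n+I_n$ is correct, but $I_n$ is \emph{not} uniformly small outside compact sets, so $S$ does not satisfy Foster's drift condition. Take $c\ge 2$ and $w=(0,\ldots,0,K)$: then $S(w)=K$ is arbitrarily large, yet only one server is loaded, and a short computation gives conditional drift $E[(V-A)^+]-E[A]$ when $c=2$, which can be strictly positive under the hypothesis $E[V]<cE[A]$ alone (e.g.\ $A\equiv 1$, $P(V{=}4)=0.4$, $P(V{=}0)=0.6$). Passing to the quadratic $\|W\|_2^2$ does yield negative drift of order $-S(w)$, but that only delivers $E[\tau^*\mid w]\le C\|w\|_2^2$, and closing your argument would then require $E\|W_v(T_1^0)\|_2^2<\infty$; in particular a finite second moment for the forward residual activity $U^i(0)$, i.e.\ $E[V^3]<\infty$, which (A1) does not grant. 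The linear bound $E[\tau^*\mid w]\le C(1+\|w\|_1)$ you assert is in fact true, but establishing it for the multi-server Kiefer--Wolfowitz recursion is exactly the nontrivial part; the paper sidesteps this by (i) needing only returns to the large set $\{W^{(c)}\le K\}$ rather than to the atom $\{0\}$, and (ii) forcing coalescence from that set by an explicit deterministic construction rather than by a drift estimate.
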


\begin{proof}[Proof of Proposition \ref{prop:termination}]
Define%
\[
\tau=\inf\left\{  n\geq1:W_{1}\left(  T_{n}^{0};W_{v}\left(  1\right)
\right)  =W_{1}\left(  T_{n}^{0};0\right)  \right\}  .
\]
By Wald's identity, $EA_{n}<\infty$, for any $n\geq1$, it suffices to show
that $E[\tau]<\infty$. We prove the proposition by considering a
sequence of events which happens with positive probability and leads to the
occurrence of $\tau$. The idea follows from the proof of Lemma 2.4 in Chapter
XII of \cite{Asm:2003}. As $E[V_{n}]<cE[A_{n}]$, for $n\geq2$,we can find
$m,\epsilon>0$ such that for every $n\geq2$, the event $H_{n}=\{V_{n}%
<cm-\epsilon,A_{n}>m\}$ is nontrivial in the sense that $P\left(
H_{n}\right)>\delta$ for some $\delta>0$. Moreover, because $V_{1}$ and $A_{1}$ are
independent with continuous distribution we also have that $P\left(
H_{1}\right)>\delta$. Now, pick $K>cm$, large enough, and define
\[
\Omega=\left\{  W_{1}^{\left(  c\right)  }\left(  T_{k}^{0};W_{v}\left(
1\right)  \right)  \leq K\right\}  \bigcap_{n=k}^{k+\lceil cK/\epsilon
\rceil+c}H_{n}.
\]
In what follows, we first show that if $\Omega$ happens, the two bounding
systems would have coalesced by the time of the $(k+c+\lceil cK/\epsilon
\rceil)$-th arrival. We then provide an upper bound for $E[\tau]$. Let
\[
\tilde{W}_{k}=W_{1}\left(  T_{k}^{0};W_{v}\left(  1\right)  \right).
\]
For $n\geq k$, define $\tilde{V}_{n}=cm-\epsilon$, $\tilde{A}_{n}=m$.
and the (auxiliary) Kiefer-Wolfowitz sequence
\[
\tilde{W}_{n+1}=\mathcal{S}\left(  \left(  \tilde{W}_{n}+\tilde{V}%
_{n}\mathbf{e}_{1}-\tilde{A}_{n}\mathbf{1}\right)  ^{+}\right).
\]
Then $\Omega$ implies $V_{n}\leq\tilde{V}_{n}$ and
$A_{n}>\tilde{A}_{n}$, for $n\geq k$, which in turn implies $W_{1}\left(
T_n^0;W_{v}\left(  1\right)  \right)  \leq\tilde{W}_{n}$. Moreover, It is easy to
check that $\tilde{W}_{n}^{\left(  1\right)  }=0$ and $\tilde{W}_{n}^{\left(
c\right)  }<cm$ for $n=k+\lceil cK/\epsilon\rceil+1,\cdots,k+\lceil
cK/\epsilon\rceil+c$. Then, $W_{1}^{\left(  1\right)  }\left(T_n^0;W_{v}\left(
1\right)  \right)  =0$ and $W_{1}^{\left(  c\right)  }\left(T_n^0;W_{v}\left(
1\right)  \right)  <cm$ for $n=k+\lceil cK/\epsilon\rceil+1,\cdots,k+\lceil
cK/\epsilon\rceil+c$. This indicates that under $\Omega$, all the arrivals
between the $(k+\lceil cK/\epsilon\rceil+1)$-th arrival and the $(k+\lceil
cK/\epsilon\rceil+c)$-th arrival (included) have zero waiting time (enter
service immediately upon arrival), and the customers initially seen by the
$(k+\lceil cK/\epsilon\rceil+1)$-th arrival would have had left the system by
the time of the $(k+\lceil cK/\epsilon\rceil+c)$-th arrival. The same analysis
clearly holds assuming that we replace $W_{1}\left(T_k^0;W_{v}\left(  1\right)
\right)  $ by $W_{1}\left(T_k^0;0\right)  $ throughout the previous discussion.
Therefore, by the time of the $(k+\lceil cK/\epsilon\rceil+c)$-th arrival, the
two bounding systems would have exactly the same set of customers with exactly
the same remaining service times, which is equal to their service times minus
the time elapsed since their arrival times (since all of them start service
immediately upon arrival). We also notice that since there is no customer waiting, the
sorted remaining service time at $T_{k+\lceil cK/\epsilon\rceil+c}^0$
coincide with the Kiefer-Wolfowitz vector $\tilde W_{k+\lceil cK/\epsilon\rceil+c}$.
Under Assumption (A1) and $\lambda<c\mu$, $\{W\left(T_n^0; w\left(
1\right)  \right)  :n\geq1\}$  for any fixed initial condition $w$,
is a positive recurrent Harris chain, \cite{Asm:2003}
-- in fact, positive recurrence follows even under the assumption of finite means.
Now, again under Assumption (A1), we have that
\[
E\left[  \sum_{i=1}^{c}W_{v}^{\left(  i\right)  }\left(  1\right)  \right]
<\infty,
\]
Therefore, it is straightforward to show, using a standard linear
Lyapunov function, that for $K$ large enough, if $\tilde{\kappa}_{1}%
:=\inf\{n\geq1:W^{\left(  c\right)  }\left(T_n^0;W_{v}\left(  1\right)  \right)
\leq K\}$, then $E[\tilde{\kappa}_{1}]<\infty$. Moreover, for $i\geq2$, define
\[
\tilde{\kappa}_{i}:=\{n>\tilde{\kappa}_{i-1}+\lceil cK/\epsilon\rceil
+c:W^{\left(  c\right)  }\left(  n;W_{v}\left(  1\right)  \right)  \leq K\}.
\]
By the positive recurrence of the Kiefer-Wolfowitz vector, we can find a
constant $M>0$ such that
$$E[\tilde{\kappa}_{i}-\tilde{\kappa}_{i-1}]<M.$$
In this way, we can split the process into cycles (not necessarily regenerative)
$[\tilde{\kappa}_{i},\tilde{\kappa}_{i+1})$ for $i\geq1$, and the initial
period $[1,\tilde{\kappa}_{1})$. We denote $\Omega_{i}=\bigcap_{n=\tilde
{\kappa}_{i}}^{\tilde{\kappa}_{i}+\lceil cK/\epsilon\rceil+c}H_{n}$ for
$i=1,2,\cdots$. Since $P(H_{n})>\delta$, $P(\Omega_{i}
)\geq\delta^{\lceil cK/\epsilon\rceil+c+1}>0$. Let $N=\inf\{i\geq1:I\left(
\Omega_{i}=1\right)  \}$ (i.e. the first $i$ for which $\Omega_{i}$ occurs),
then $E[N]\leq\delta^{-(\lceil cK/\epsilon\rceil+c+1)}<\infty$. By Wald's
identity we have (setting $\tilde{\kappa}_{0}=0$) that
\begin{align*}
E[\tau]  &  \leq E[\tilde{\kappa}_{N}]+\lceil cK/\epsilon\rceil+c\\
&  =E\sum_{i=1}^{N}(\tilde{\kappa}_{i}-\tilde{\kappa}_{i-1})+\lceil
cK/\epsilon\rceil+c\\
&  \leq E[N]\times M+E[\tilde\kappa_{1}]+\lceil cK/\epsilon\rceil+c.
\end{align*}
\end{proof}

\section{Fact I: Simulation of Stationary Vacation System Backwards in
Time\label{sec:algorithm}}

In this section, we address the validity of Fact I, namely, that we can
simulate the vacation system backwards in time, jointly with $\left\{
T_{n}^{i}:m\leq n\leq-1\right\}  $ for $1\leq i\leq c$ for any $m\leq-1$.

Let $G_{e}(\cdot)=\lambda\int_{0}^{\cdot}\bar{G}(x)dx$ and $F_{e}(\cdot
)=\mu\int_{0}^{\cdot}\bar{F}(x)dx$ denote equilibrium CDF's of the
interarrival time and service time distributions respectively. We first notice
that simulating the stationary arrival process $\left\{  T_{n}^{0}%
:n\leq-1\right\}  $ and stationary service/vacation completion process
$\left\{  T_{n}^{i}:n\leq-1\right\}  $ for each $1\leq i\leq c$ is
straightforward by the reversibility of $\mathcal{T}_{n}^{i}$ for $0\leq i\leq
c$. Specifically, we can simulate the renewal arrival process forward in time
from time $0$ with the first interarrival time following $G_{e}$ and
subsequence interarrival times following $G$. We then set $T_{-k}^{0}%
=-T_{k}^{0}$ for all $k\geq1$. Likewise, we can also simulate the
service/vacation completion process of server $i$, for $i=1,\dots,c$, forward
in time from time $0$ with the first service/vacation completion time
following $F_{e}$ and subsequent service/vacation requirements distributed as
$F$. Let $T_{k}^{i}$ denote the $k$-th service/vacation completion time of
server $i$ counting forwards (backwards) in time. Then we set $T_{-k}%
^{i}=-T_{k}^{i}$.

Similarly, we have the equality in distribution, for all $t\geq0$ (jointly)%
\[
X_{-t}\left(  0\right)  =X_{0}\left(  t\right)  ,
\]
therefore we have from (\ref{Stat_Backwards}) that the following equality in
distribution holds for all $t\geq0$ (jointly)%
\[
Q_{v}(-t)=\sup_{s\geq t}X_{0}\left(  s\right)  -X_{0}\left(  t\right)  .
\]
The challenge in simulating $Q_{v}(-t)$, involves in sampling $M(t)=\max
_{s\geq t}\{X_{0}(s)\}$ jointly with $N_{0}^{i}\left(  t\right)  $ for $1\leq
i\leq c$ during any time interval of the form $[0,T]$ for $T>0$. If we can do
that, then we can evaluate
\[
X_{0}(t)=N_{0}^{0}(t)-\sum_{i=1}^{c}N_{0}^{i}(t),
\]
and, therefore, $Q_{v}(-t)=M(t)-X(t)$.

In what follows we first introduce a trick to decompose the process of
sampling $M\left(  t\right)  $ into that of sampling the maximum of $c+1$
independent negative drifted random walks.

Choose $a\in(\lambda,c\mu)$. Then
\[
X(t)=(N_{0}^{0}(t)-at)+\sum_{i=1}^{c}\left(  \frac{a}{c}t-N_{0}^{i}(t)\right)
.
\]
We define $(c+1)$ negative drifted random walks as follows:
\[
S_{0}^{(0)}=0,~~~~S_{n}^{(0)}=S_{n-1}^{(0)}+(-aA_{n}+1)\mbox{ for
$n=1,2,\dots$},
\]
and for $i=1,\dots,c$,
\[
S_{0}^{(i)}=\frac{a}{c}V_{1}^{(i)},~~~S_{n}^{(i)}=S_{n-1}^{(i)}+\left(
-1+\frac{a}{c}V_{n+1}^{(i)}\right)  \mbox{ for $n=1,2,\dots$}.
\]

Figure \ref{fig:RW} plots the relationship between $\{N_{0}^{0}(t)-at:t\geq
0\}$ and $\{S_{n}^{(0)}:n\geq0\}$, and the relationship between $\{\frac{a}%
{c}t-N_{0}^{i}(t):t\geq0\}$ and $\{S_{n}^{(i)}:n\geq0\}$ for $i=1,\dots,c$.
From Figure \ref{fig:RW}, it is easy to check that
\[
\max_{s\geq t}\{N_{0}^{0}(s)-as\}=\max\{N_{0}^{0}(t)-at,\max_{n\geq N_{0}%
^{0}(t)+1}\{S_{n}^{(0)}\}\},
\]
and for $i=1,\dots,c$,
\[
\max_{s\geq t}\left\{  \frac{a}{c}s-N_{0}^{i}(s)\right\}  =\max\{\frac{a}%
{c}t-N_{0}^{i}(t),\max_{n\geq N_{0}^{i}(t)}\{S_{n}^{(i)}\}\}.
\]
\begin{figure}[tbh]
\caption{The relationship between the renewal processes and the random walks}%
\label{fig:RW}%
\centering
\subfloat[$N_0^0(t)-at$ and $S_n^{(0)}$] {\includegraphics[width=0.4\textwidth]{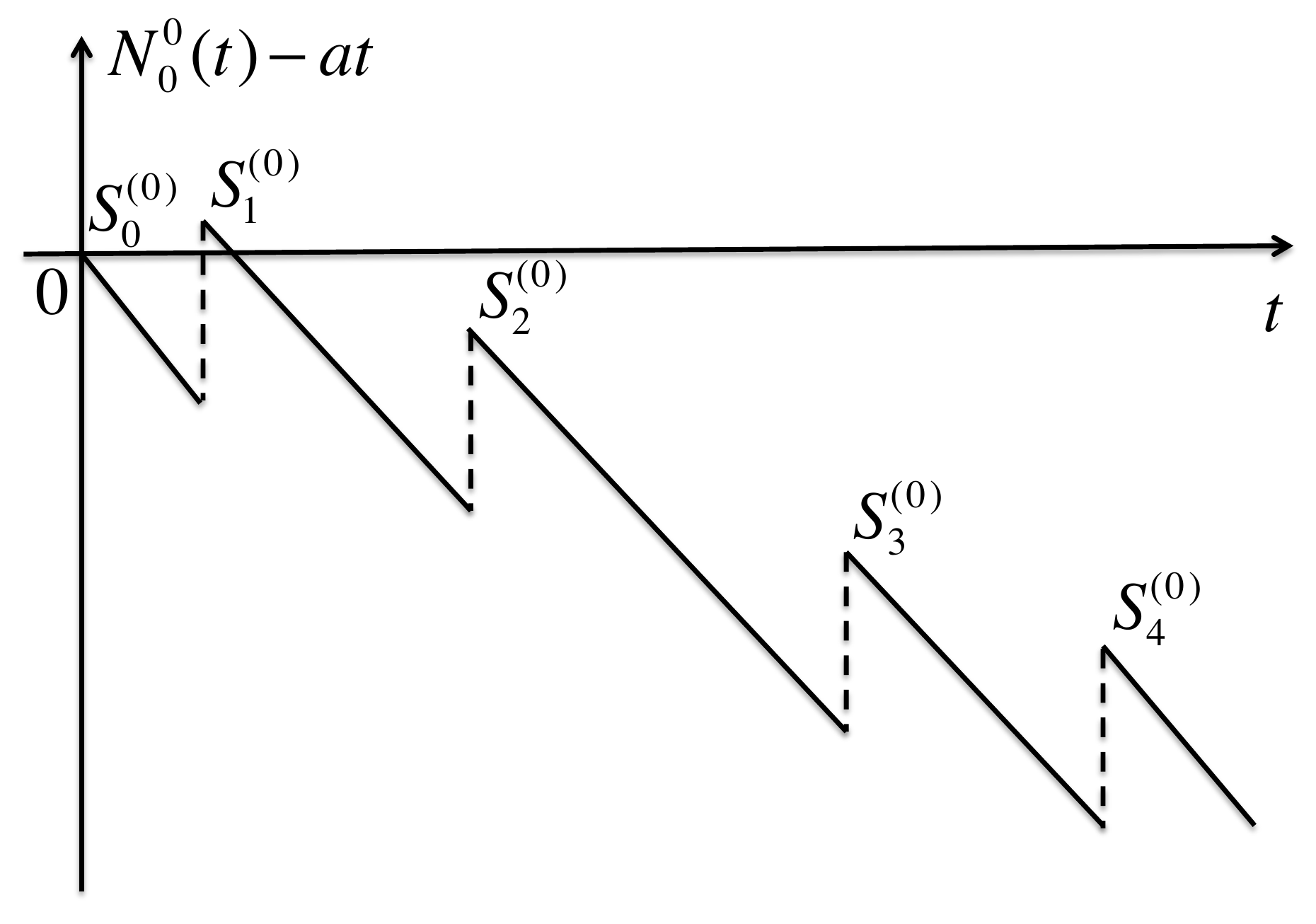} }
~~~~~ \subfloat[$(a/c) t -N_0^i(t)$ and $S_n^{(i)}$] {\includegraphics[width=0.4\textwidth]{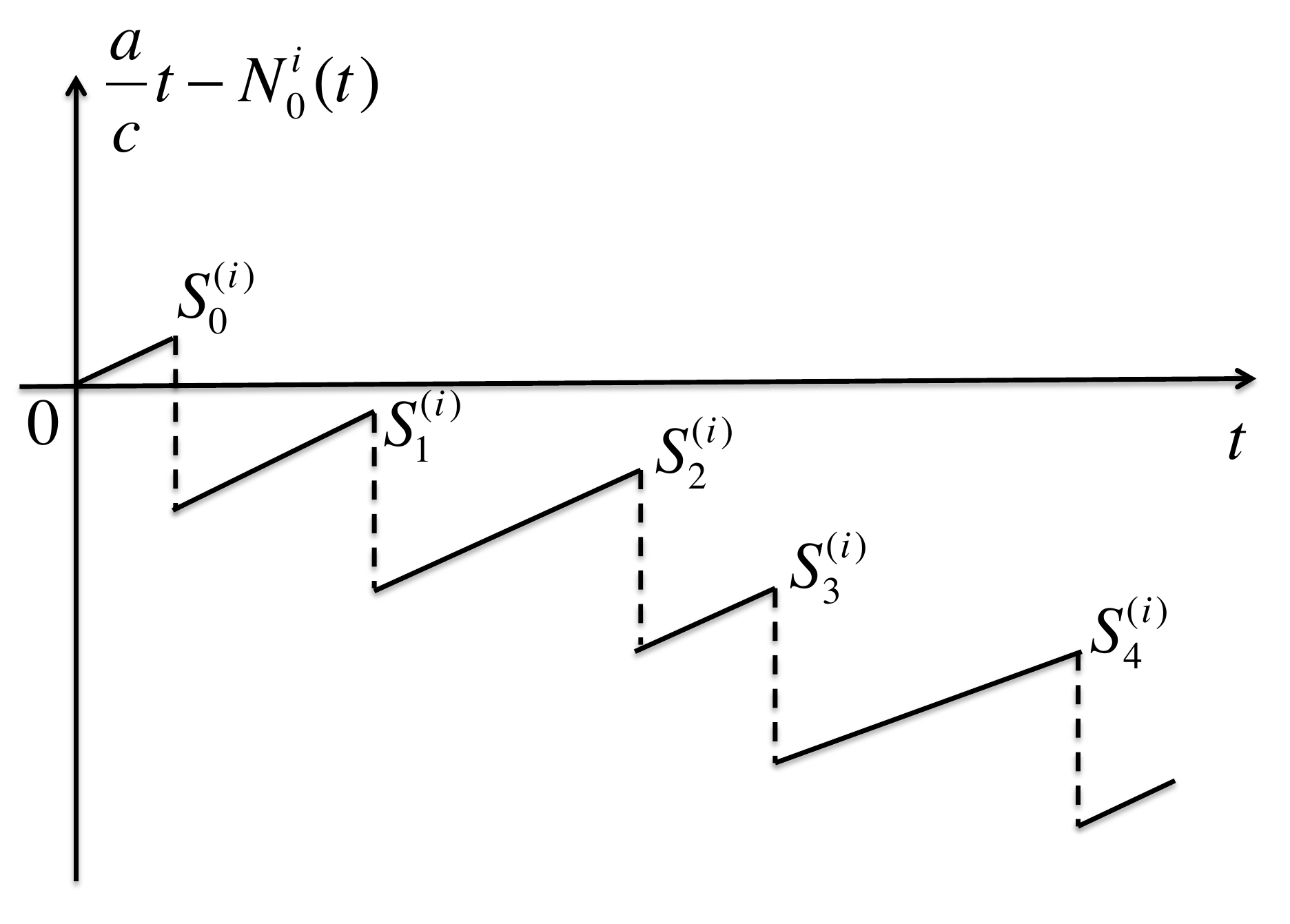}}\end{figure}

Consequently, the algorithm to simulate $M(t)=\max_{s\geq t}\{X_{0}(s)\}$
jointly with $N_{0}^{i}\left(  t\right)  $ for $1\leq i\leq c$ during any time
interval of the form $[0,T]$ for $T>0$ can be implemented if we can simulate
$S_{n}^{(i)}$ jointly with $M_{n}^{\left(  i\right)  }=\max\{S_{k}^{\left(
i\right)  }:k\geq n\}$ for $n\geq1$\ for $i=0,1,\dots,c$ .  The algorithm to generate
$S_{n}^{(i)}$ jointly with $M_{n}^{\left(  i\right)  }$ under Assumption (A1)
has been developed in \cite{BlanWall:2014}. We provide a
detailed Matlab implementation of each of the algorithms required to execute
Facts I-IV in the online appendix to this paper.

\section{Numerical experiments\label{sec:numerical}}

As a sanity check, we have implemented our
Matlab code in the case of an $M/M/c$ queue, for which the steady-state analysis
can be performed in closed form.



As a first step, we have compared the theoretical distribution to the empirical distribution
obtained from a large number of runs of our perfect simulation algorithm for
different sets of parameter values, and they are all in close agreement. As an example,
Figure \ref{fig:testNumCus1} shows the result of such test when $\lambda=3$,
$\mu=2$, $c=2$. Grey bars show the empirical result of $5,000$ draws using our
perfect simulation algorithm, and black bars show the theoretical distribution
of number of customers in system. Figure \ref{fig:testNumCus2} provides another comparison
with a different set of parameters.

\begin{figure}[ptb]
\centering
{\includegraphics[width=1\textwidth]{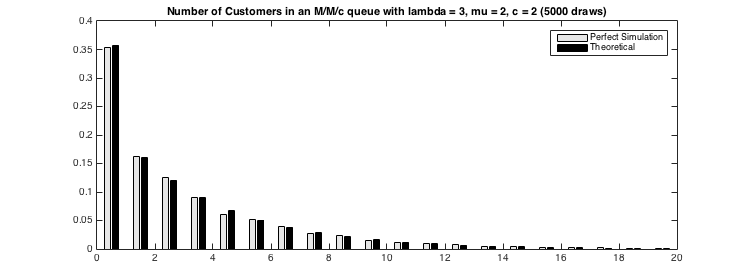} }%
\caption{Number of customers for an $M/M/c$ queue in stationarity when
$\lambda=3$, $\mu=2$ and $c=2$.}%
\label{fig:testNumCus1}%
\end{figure}

\begin{figure}[ptb]
\centering
{\includegraphics[width=1\textwidth]{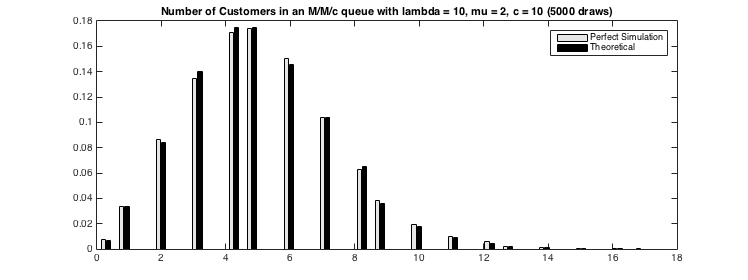}
}\caption{Number of customers for an $M/M/c$ queue in stationarity when
$\lambda=10$, $\mu=2$ and $c=10$.}%
\label{fig:testNumCus2}%
\end{figure}

Next we run numerical experiments to see how the running time of our
algorithm, measured by mean coalescence time of two bounding systems, scales
as the number of servers grows and the traffic intensity $\rho$ changes.
Starting from time $0$, the upper bound queue has its queue length sampled
from the theoretical distribution of an $M/M/c$ vacation system and all
servers busy with remaining service times drew from the equilibrium
distribution of the service/vacation time; and the lower bound queue is empty. Then we
run both the upper bound and lower bound queues forward in time with the same
stream of arrival times and service requirements until they coalescence.
Table \ref{table:1} shows the estimated average coalescence time,
$E[T]$, based on $5000$ iid samples, for different system scales
in the Quality driven regime (QD). We observe that $E[T]$ does not
increase much as the system scale parameter, $s$, grows. Table \ref{table:2} shows similar
results for the Quality-and-Efficiency driven operating regime (QED). In this
case, $E[T]$ increases at a faster rate with $s$ than the QD case, but the
magnitude of increment is still not significant.

\begin{table}[th]
\caption{simulation result for coalescence time of $M/M/c$ queue}%
\label{table:1}
\centering
{\small (QD: $\lambda_{s}=s, c_{s}=1.2s, \mu=1$)} \bigskip
\par
\renewcommand{\arraystretch}{1.5}
\begin{tabular}
[c]{||ccc||}\hline
s & mean & 95\% confidence interval\\[0.5ex]\hline\hline
100 & 6.4212 & [6.2902, 6.5522]\\\hline
500 & 7.0641 & [6.9848, 7.1434]\\\hline
1000 & 7.7465 & [7.6667, 7.8263]\\\hline
\end{tabular}
\end{table}

\begin{table}[h]
\caption{simulation result for coalescence time of $M/M/c$ queue}%
\label{table:2}
\centering
{\small (QED: $\lambda_{s}=s, c_{s}=s+2\sqrt{s}, \mu=1$)} \bigskip
\par
\renewcommand{\arraystretch}{1.5}
\begin{tabular}
[c]{||ccc||}\hline
s & mean & 95\% confidence interval\\[0.5ex]\hline\hline
100 & 6.5074 & [6.3771, 6.6377]\\\hline
500 & 8.5896 & [8.4361, 8.7431]\\\hline
1000 & 9.4723 & [9.3041, 9.6405]\\\hline
\end{tabular}
\end{table}

Finally we run a numerical experiment aiming to test how computational complexity of our algorithm changes with traffic intensity, $\rho=\lambda/c\mu$. Here we define the computational complexity as the total number of renewals (including arrivals and services/vacations) the algorithm samples in total to find the coalescence time. We expect the complexity to scale like $(c+1)(1-\rho)^{-2}E[T(\rho)]$ where $(c+1)$ is the number of renewal processes we need to simulate, $(1-\rho)^{-2}$ is on average the amount of renewals we need to sample to find its running time maximum for each renewal process, and $E[T(\rho)]$ is the mean coalescence time when the traffic intensity is $\rho$. Table \ref{table:3} summarizes our numeral results, based 5000 independent runs of the algorithm for each $\rho$. We run the coalescence check at $10\times 2^k$ for $k=1,2,\dots$, until we find the coalescence. We observe that as $\rho$ increase, the computational complexity increases significantly, but when multiplied by $(1-\rho)^2$, the resulting products are of about the same magnitude - up to a factor proportional to $\lambda$, given that the number of arrivals scales as $lambda$ per unit time. Therefore, the main scaling parameter for the complexity here is $(1-\rho)^{-2}$. Notice that if we simulate the system forward in time from empty, it also took around $O\left((1-\rho)^{-2}\right)$ arrivals to get close to stationary.

\begin{table}[th]
\caption{simulation result for computational complexities with varying traffic intensities}
\label{table:3}
\centering
{\small $M/M/c$ queue with fixed $\mu=5$ and $c=2$} \bigskip
\par
\renewcommand{\arraystretch}{1.5}
\begin{tabular}{ | P{.5cm} | P{9em}| P{8em} | P{11em} | P{13em} | } \hline
$\lambda$ & traffic intensity $\left(\rho\right)$ & mean number of renewals sampled & mean index of successful inspection time & mean number of renewals sampled $\times\left(1-\rho\right)^2$ \\
\hline
5 & 0.5 & 225.6670 & 11.7780 & 56.4168 \\
\hline
6 & 0.6 & 377.0050 & 14.7780 & 60.3208 \\
\hline
7 & 0.7 & 764.3714 & 21.9800 & 68.7934\\
\hline
8 & 0.8 & 2181.3452 & 44.2320 & 87.2538\\
\hline
9 & 0.9 & 12162.6158 & 161.0840 & 121.6262\\
\hline
\end{tabular}
\end{table}

\bibliographystyle{plain}
\bibliography{exact_ref}

\end{document}